\newcommand{\showcomments}{yes}
\newsavebox{\commentbox}
\newtheorem{thm}{Theorem}[section]
\newtheorem{theorem}[thm]{Theorem}
\newtheorem{corollary}[thm]{Corollary}
\newtheorem{lemma}[thm]{Lemma}
\newtheorem{proposition}[thm]{Proposition}
\newtheorem{claim}[thm]{Claim}
\theoremstyle{definition}
\newtheorem{definition}[thm]{Definition}
\theoremstyle{remark}
\newtheorem{historicalremark}[thm]{Historical Remark}
\newtheorem{notation}[thm]{Notation}
\newtheorem{remark}[thm]{Remark}
\newtheorem{example}[thm]{Example}
\newcommand{\nclose}[1]{\ensuremath{\langle\!\langle#1\rangle\!\rangle}}
\newcommand{\dist}{\textup{\textsf{d}}}
\newcommand{\field}[1]{\mathbb{#1}}
\newcommand{\integers}{\ensuremath{\field{Z}}}
\newcommand{\naturals}{\ensuremath{\field{N}}}
\newcommand{\reals}{\ensuremath{\field{R}}}
\title{A cubical Rips construction}
\author{Macarena Arenas}
\address{DPMMS, Centre for Mathematical Sciences, Wilberforce Road, Cambridge, CB3 0WB, UK}
\email{mcr59@dpmms.cam.ac.uk}
\subjclass[2010]{20F06, 20F67}
\keywords{Small Cancellation, Cube Complexes, Hyperbolic Groups}
\thanks{The author was funded by a Cambridge Trust \& Newnham College Scholarship.}
\begin{document}

\begin{abstract}
Given a finitely presented group $Q$ and a compact special cube complex $X$ with non-elementary hyperbolic fundamental group, we produce a non-elementary, torsion-free, cocompactly cubulated hyperbolic group $\Gamma$ that surjects onto $Q$, with kernel isomorphic to a quotient of $G=\pi_1 X$ and such that $max\{cd(G),2\}\geq cd(\Gamma)\geq cd(G)-1$.
\end{abstract}

\maketitle

\section{introduction}

The Rips exact sequence, first introduced by Rips in~\cite{Rips82}, is a useful tool for producing examples of groups satisfying combinations of properties that are not obviously compatible.  It works by taking as an input an arbitrary finitely presented group $Q$, and producing as an output a hyperbolic group $\Gamma$ that maps onto $Q$ with finitely generated kernel. The ``output group" $\Gamma$ is crafted by adding generators and relations to a presentation of $Q$, in such a way that these relations create enough ``noise" in the presentation to ensure hyperbolicity. One can then lift pathological properties of $Q$ to (some subgroup of) $\Gamma$. 
For instance, Rips used his construction to produce the first examples of incoherent hyperbolic groups, hyperbolic groups with unsolvable generalised word problem, hyperbolic groups having finitely generated subgroups whose intersection is not finitely generated, and hyperbolic groups containing infinite ascending chains of $r$-generated groups.

The purpose of this work is to present a new variation of the Rips exact sequence. Our main result is:

\begin{theorem}[Theorem~\ref{thm:main}]
Let  $Q$ be a finitely presented group and $G$ be the fundamental group of a compact special (in the sense of~\cite{HaglundWiseSpecial}) cube complex $X$. If $G$ is hyperbolic and non-elementary, then there is a short exact sequence

\begin{equation*}\label{eq:sesintro}
1 \rightarrow N \rightarrow \Gamma \rightarrow Q \rightarrow 1
\end{equation*}

where \begin{enumerate}
\item $\Gamma$ is a hyperbolic, cocompactly cubulated group, 
\item $N \cong G/K$ for some $K < G$,
\item \label{con:3} $max\{cd(G),2\}\geq cd(\Gamma)\geq cd(G)-1$. In particular, $\Gamma$  is torsion-free.

\end{enumerate}

\end{theorem}

\begin{remark}
By Agol's Theorem~\cite{AgolGrovesManning2012}, the group $\Gamma$ obtained in Theorem~\ref{thm:main} is in fact virtually special.
\end{remark}

Many variations of Rips' original construction have been produced over the years by a number of authors, including Arzhantseva-Steenbock~\cite{arzhantseva2014rips}, Barnard-Brady-Dani~\cite{BBD07}, Baumslag-Bridson-Miller-Short~\cite{BBMS00}, Belegradek-Osin~\cite{BOsin08}, Bridson-Haefliger~\cite{BridsonHaefliger}, Bumagin-Wise~\cite{BW05}, Haglund-Wise~\cite{HaglundWiseSpecial}, Ollivier-Wise~\cite{OllivierWise07}, and Wise~\cite{WiseRFRips,Wise98}. Below is a sample of their corollaries:

\begin{itemize}
\item There exist non-Hopfian groups with Kazhdan's Property (T) \cite{OllivierWise07}.
\item Every countable group embeds in the outer automorphism group of a group with Kazhdan's Property (T) \cite{OllivierWise07, BOsin08}.
\item Every finitely presented group embeds in the outer automorphism group of a finitely generated, residually finite group \cite{WiseRFRips}.
\item There exists an incoherent group that is the fundamental group of a compact negatively curved 2-complex \cite{Wise98}.
\item There exist hyperbolic special groups that contain (non-quasiconvex) non-separable subgroups \cite{HaglundWiseSpecial}.
\item Property (T) and property FA are not recursively recognizable among hyperbolic groups \cite{BOsin08}.
\end{itemize}

The groups in Rips' original constructions are cubulable by~\cite{WiseSmallCanCube04}, as are the groups in~\cite{HaglundWiseSpecial}; on the other extreme, the groups produced in~\cite{OllivierWise07} and in~\cite{BOsin08} can have Property (T), so will not be cubulable in general (see~\cite{NibloReeves97}).

A notable limitation of all available Rips-type techniques is that the hyperbolic group $\Gamma$ surjecting onto $Q$ will have cohomological dimension at most equal to $2$. This is unsurprising, since, in a precise sense, ``most'' hyperbolic groups  are $2$-dimensional (see~\cite{Gromov93} and \cite{Ollivier05invitation}). Moreover, examples of hyperbolic groups having large cohomological dimension are scarce: Gromov conjectured in~\cite{Gromov87} that all constructions of high-dimensional hyperbolic groups must utilise number-theoretic techniques, and later on, Bestvina~\cite{BestvinaProblemList} made this precise by asking whether for every $K >0$ there is an $N >0$ such that all hyperbolic groups of (virtual) cohomological dimension $\geq N$ contain an arithmetic lattice of dimension $\geq K$. Both of these questions have been answered in the negative by work of a number of people, including Mosher-Sageev~\cite{MosherSageev97}, Januszkiewicz-Swiatkowski~\cite{JanuszkiewiczSwiatkowskiCoxeter03}, and later on Fujiwara-Manning~\cite{FujMan10}, and Osajda~\cite{Osajda13}, but flexible constructions are still difficult to come by.

Theorem~\ref{thm:main} produces cocompactly cubulated hyperbolic groups containing quotients of arbitrary special hyperbolic groups. 
While our construction particularises to Rips' original result, it can also produce groups with large cohomological dimension. Thus, it serves to exhibit a collection of examples of hyperbolic groups that is new and largely disjoint from that produced by all other Rips'-type theorems. 

Most versions of Rips' construction, including the original, rely on some form of small cancellation. This is what imposes a bound on the dimension of the groups thus obtained:  group presentations are inherently 2-dimensional objects, and one can prove that the presentation complexes associated to (classical and graphical) small cancellation presentations are aspherical.

We rely instead on \emph{cubical} presentations and \emph{cubical} small cancellation theory, which is intrinsically higher-dimensional. Roughly speaking, cubical small cancellation theory considers higher dimensional analogues of group presentations: pieces in this setting are overlaps between higher dimensional subcomplexes, and cubical small cancellation conditions measure these overlaps. This viewpoint allows for the use of non-positively curved cube complexes and their machinery, and has proved fruitful in many contexts, most notably in Agol's proofs of the Virtual Haken and Virtual Fibred Conjectures~\cite{AgolGrovesManning2012, Agol08}, which build on work of Wise~\cite{WiseIsraelHierarchy} and his collaborators~\cite{BergeronWiseBoundary, HaglundWiseAmalgams, HsuWiseCubulatingMalnormal}.

While many groups have convenient cubical presentations, producing these, or proving that they do satisfy useful cubical small cancellation conditions, is difficult in general. Some examples are discussed in~\cite{WiseIsraelHierarchy}, \cite{ArHag2021}, and \cite{jankiewicz2017cubulating}. Other than these, we are not aware of instances where explicit examples of non-trivial cubical small cancellation presentations are given, nor of many results producing families of examples with some given list of properties. This note can be viewed as one such construction, and can be used to produce explicit examples that are of a fundamentally different nature to those already available.
 
\subsection{Structure of the paper}
In Section~\ref{sec:back} we  present the necessary background on hyperbolicity, quasiconvexity and cubical small cancellation theory. 
In Section~\ref{sec:noise} we state and prove Theorem~\ref{thm:cyclic}, which is the main technical result, and also state and prove some auxiliary lemmas. In Section~\ref{sec:main} we give the proof of Theorem~\ref{thm:main}. Finally, in Section~\ref{sec:coho} we review some standard material on the cohomological dimension of groups, and analyse the cohomological dimension of $\Gamma$. 

\subsection*{Acknowledgements} I am grateful to Henry Wilton, Daniel Groves, and the anonymous referee for useful comments and suggestions, and to Sami Douba and Max Neumann-Coto for stylistic guidance.

\section{Background}\label{sec:back}

We utilise the following theorem of Arzhantseva~\cite{Arzhantseva2001}:

\begin{theorem}\label{thm:quasi}
Let $G$ be a non-elementary torsion-free hyperbolic group and $H$ a quasiconvex subgroup of $G$ of infinite index. Then there exist infinitely many $g \in G$ for which the subgroup $\langle H, g\rangle $ is isomorphic to $ H * \langle g \rangle$ and is quasiconvex in $G$.
\end{theorem}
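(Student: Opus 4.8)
The plan is to reduce the statement to producing a single suitable loxodromic element and then to run a ping--pong argument inside a Cayley graph of $G$. First, since $[G:H]=\infty$ and $H$ is quasiconvex, the limit set $\Lambda H\subseteq\partial G$ is a proper closed subset: a quasiconvex subgroup acts cocompactly on the quasiconvex hull of its limit set, so $\Lambda H=\partial G$ would force $H$ to act cocompactly on $G$ and hence to have finite index. As $G$ is non-elementary its action on $\partial G$ is minimal and the fixed--point pairs of loxodromic elements are dense in $\partial G\times\partial G\setminus\Delta$; since $\partial G\setminus\Lambda H$ is open and nonempty, I fix a loxodromic $g_0$ with $g_0^{\pm\infty}\in\partial G\setminus\Lambda H$. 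Let $\tau>0$ be its translation length and $\ell_0$ a quasigeodesic axis; because $G$ is torsion--free, the maximal elementary subgroup containing $g_0$ is infinite cyclic and $\langle g_0\rangle$ (hence every $\langle g_0^{N}\rangle$) is an undistorted, quasiconvex copy of $\integers$. Since the quasiconvex sets $\ell_0$ and the quasiconvex hull $\mathcal H$ of $H$ have disjoint limit sets, a standard fact about $\delta$--hyperbolic spaces gives, for each $r$, a bound $D=D(r,\delta)$ on the diameter of $\mathcal N_r(\ell_0)\cap\mathcal N_r(\mathcal H)$; by equivariance the same $D$ bounds the overlap of any $G$--translate of $\ell_0$ with any $G$--translate of $\mathcal H$ having disjoint limit sets.

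\textbf{Ping--pong for one $g$.} Set $g=g_0^{N}$ with $N$ to be chosen. I claim $H*\langle g\rangle\to G$ is injective with quasiconvex image, which proves the theorem for this $g$. Given a nontrivial normal form $w=h_0 g^{n_1}h_1\cdots g^{n_k}h_k$ (with $h_i\in H\setminus\{1\}$ for $0<i<k$ and $n_i\neq 0$), build a path in the Cayley graph of $G$ by alternately following geodesics inside translates of $\mathcal N_\epsilon(H)$ (representing the syllables $h_i$, where $\epsilon$ is a quasiconvexity constant for $H$) and subsegments of translates of $\ell_0$ (representing the syllables $g^{n_i}$, of length roughly $|n_i|\tau N$). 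A direct computation with the normal form shows that each junction between two consecutive segments is an overlap between a translate of $\mathcal H$ and a translate of $\ell_0$ with disjoint limit sets, hence of diameter at most $D$; choosing $N$ so that $\tau N$ dominates $D,\epsilon,\delta$ makes every axis segment long compared to the overlaps. A local--to--global argument for quasigeodesics in a $\delta$--hyperbolic space then shows the path is a uniform quasigeodesic: its endpoints are distinct (so $H*\langle g\rangle\to G$ is injective) and its length is comparable to $\sum_i(|h_i|_H+|n_i|)$, so the orbit map of $\langle H,g\rangle$ is a quasi--isometric embedding and $\langle H,g\rangle\cong H*\langle g\rangle$ is quasiconvex in $G$. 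Equivalently, one checks that the $\langle H,g\rangle$--equivariant map from the Bass--Serre tree of $H*\langle g\rangle$ into the Cayley graph is a quasi--isometric embedding, or invokes a combination theorem for graphs of quasiconvex subgroups, the bounded overlap $D$ supplying the acylindricity/flare hypothesis.

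\textbf{Infinitely many $g$.} The argument yields a threshold $N_0$ such that $g=g_0^{N}$ works for all $N\ge N_0$, giving infinitely many distinct elements. Alternatively, the loxodromic fixed--point pairs inside $\partial G\setminus\Lambda H$ are infinite, yielding infinitely many independent loxodromics on which to run the argument; or one observes that if $g$ works then so does each power $g^{m}$, since $h_0 g^{mn_1}h_1\cdots$ is already reduced, so $\langle H,g^{m}\rangle=H*\langle g^{m}\rangle$, and quasiconvexity is transitive along $H*\langle g^{m}\rangle\le H*\langle g\rangle\le G$.

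\textbf{Main obstacle.} The delicate point is the local--to--global step when an internal syllable $h_i$ is short, so the crude concatenation lemma (which demands every segment be long) does not apply; one must instead control the Gromov products at the break points, showing that a short $H$--segment wedged between two long axis segments still turns enough. Here one uses that the two supporting axis lines at such a junction are distinct translates of $\ell_0$ --- their coincidence would produce a nontrivial element of $H\cap\langle g_0\rangle$, impossible since these subgroups have disjoint limit sets and $G$ is torsion--free --- so they diverge and the straddling subpath is a uniform quasigeodesic. Making these turning estimates uniform in $w$ is the technical heart of the proof; everything else is soft hyperbolic geometry.
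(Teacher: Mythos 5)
The paper does \emph{not} prove Theorem~\ref{thm:quasi}: it is imported verbatim from Arzhantseva~\cite{Arzhantseva2001} and used as a black box, so there is no in-paper argument to compare against. Your ping--pong sketch is a reasonable geometric route to the statement and is broadly in the spirit of Arzhantseva's own proof.

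Two precision points, one harmless and one substantive. The harmless one: the sentence ``by equivariance the same $D$ bounds the overlap of any $G$-translate of $\ell_0$ with any $G$-translate of $\mathcal H$ having disjoint limit sets'' overreaches---equivariance bounds $\mathcal N_r(g\ell_0)\cap\mathcal N_r(g\mathcal H)$, not $\mathcal N_r(g_1\ell_0)\cap\mathcal N_r(g_2\mathcal H)$ for unrelated $g_1,g_2$, and the latter has no uniform bound in general. You never need that generality: after left-translating by the common prefix of $w$, every junction reduces either to $\ell_0$ against $\mathcal H$, or, in the short-$h_i$ case, to $\ell_0$ against $h\ell_0$ with $h\in H\setminus\{1\}$.

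The substantive point is the one you yourself flag as the ``technical heart'' but do not resolve: for the short-$h_i$ case, mere \emph{distinctness} of $\ell_0$ and $h\ell_0$ is not enough; you need $\operatorname{diam}\bigl(\mathcal N_r(\ell_0)\cap\mathcal N_r(h\ell_0)\bigr)$ to be bounded \emph{uniformly} over $h\in H\setminus\{1\}$, or the straddling subpath will not be a uniform quasigeodesic. This is true but requires an argument, e.g.\ the bounded-overlap property of quasiconvex subgroups (as in Gitik--Mitra--Rips--Sageev): if $\mathcal N_R(\langle g_0\rangle)\cap\mathcal N_R(h\langle g_0\rangle)$ has diameter above a threshold, then $\langle g_0\rangle\cap\langle g_0\rangle^{h^{-1}}$ is infinite, forcing $h$ to normalize $\{g_0^{\pm\infty}\}$ and hence $h\in E(g_0)$; but $E(g_0)\cap H=\{1\}$ because $E(g_0)$ is cyclic (torsion-freeness) with limit set $\{g_0^{\pm\infty}\}\subset\partial G\setminus\Lambda H$, so $h=1$. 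With this bound supplied, your local-to-global step and the free-product conclusion go through, and $g_0^N$ for $N\ge N_0$ does yield infinitely many admissible $g$.
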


Since cyclic subgroups of a hyperbolic group $G$ are necessarily quasiconvex, one can repeatedly apply Theorem~\ref{thm:quasi} to produce quasiconvex free subgroups of any finite rank:

\begin{corollary}\label{cor:quasi}
If $G$ is a non-elementary torsion-free hyperbolic group, then for every $n \in \naturals$ there exists a quasiconvex subgroup $F_n < G$. 
\end{corollary}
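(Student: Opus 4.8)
The plan is to induct on $n$, using Theorem~\ref{thm:quasi} to build the free subgroup one generator at a time. For the base case, when $n = 0$ we take the trivial subgroup, and when $n = 1$ we use the fact that $G$ is non-elementary and torsion-free: pick any nontrivial $g \in G$, so $\langle g \rangle \cong \integers$ is infinite cyclic. Since $G$ is hyperbolic, this cyclic subgroup is quasiconvex (geodesics in a hyperbolic group fellow-travel the orbit of a cyclic subgroup, a standard fact). This gives $F_1 < G$ quasiconvex.

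For the inductive step, suppose we have constructed a quasiconvex subgroup $F_n < G$ with $F_n$ free of rank $n$. The key point is that $F_n$ has infinite index in $G$: indeed, a non-elementary hyperbolic group is not virtually cyclic, so it cannot be a finite extension of a free group of finite rank unless\ldots more carefully, if $F_n$ had finite index in $G$ then $G$ would be virtually free, hence virtually $\integers$ or virtually $F_k$ with $k \geq 2$; the former contradicts non-elementarity, and the latter forces $G$ to contain $F_2$, which is fine, but then $G$ itself would be virtually free of rank $\geq 2$ --- this is not excluded a priori, so instead I argue: a finite-index subgroup of a non-elementary hyperbolic group is non-elementary, but a free group $F_n$ of finite rank is non-elementary only if $n \geq 2$, and even then we need infinite index. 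The cleanest route: if $F_n$ has finite index in $G$, then $G$ is virtually free; but we may simply choose at the outset, within the induction, to always pick generators so that the resulting free group is a proper free factor of a larger quasiconvex free subgroup, or — most simply — observe that since $G$ is not virtually free whenever it has, say, a higher-rank feature; to avoid this case analysis I instead note that we only ever need $F_n$ of \emph{infinite} index to apply Theorem~\ref{thm:quasi}, and this can be guaranteed directly: a non-elementary hyperbolic group contains a quasiconvex subgroup isomorphic to $F_2$ of infinite index (a standard ping-pong argument on two independent loxodromic elements, with commensurated malnormality giving infinite index), and once we have an infinite-index quasiconvex $F_n$, Theorem~\ref{thm:quasi} applied with $H = F_n$ produces $g \in G$ with $\langle F_n, g\rangle \cong F_n * \langle g \rangle \cong F_{n+1}$, quasiconvex in $G$.

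The one remaining gap is verifying that the $F_{n+1}$ so produced again has infinite index, so that the induction continues. This follows because Theorem~\ref{thm:quasi} in fact produces \emph{infinitely many} such $g$, lying in infinitely many distinct double cosets (or: if $F_{n+1}$ had finite index, then $F_n$ would also have finite index, contradicting the inductive hypothesis — since $[G : F_n] \leq [G : F_{n+1}] \cdot [F_{n+1} : F_n]$ and both factors on the right would be finite). Hence $[G : F_n] = \infty$ for all $n$ by induction, and Theorem~\ref{thm:quasi} applies at every stage.

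I expect the main obstacle to be purely expository rather than mathematical: namely, cleanly justifying the infinite-index hypothesis needed to invoke Theorem~\ref{thm:quasi}. As noted above, the slick way to handle it is the observation $[G:F_n] \mid$ (or at least $\leq$) $[G:F_{n+1}]\cdot[F_{n+1}:F_n]$, so finite index of $F_{n+1}$ would propagate backward to finite index of $F_n$; combined with the fact that $F_1 \cong \integers$ has infinite index in a non-elementary (hence non-virtually-cyclic) hyperbolic group, the induction closes. Everything else — quasiconvexity of cyclic subgroups, and the free-product conclusion — is handed to us by the hypotheses and by Theorem~\ref{thm:quasi}.
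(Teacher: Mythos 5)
Your strategy is the same as the paper's: iterate Theorem~\ref{thm:quasi}, starting from a cyclic (hence quasiconvex) subgroup, adjoining one generator at a time. That is exactly the paper's (essentially one-line) proof, and your instinct that the infinite-index hypothesis of Theorem~\ref{thm:quasi} needs to be verified at each step is a reasonable and legitimate point to raise.

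However, the specific argument you give to propagate infinite index is incorrect. You claim that if $F_{n+1}$ had finite index in $G$ then $F_n$ would too, because $[G:F_n] \leq [G:F_{n+1}]\cdot[F_{n+1}:F_n]$ ``and both factors on the right would be finite.'' But $[F_{n+1}:F_n]$ is \emph{never} finite here: $F_{n+1} = F_n * \langle g \rangle$ is a free product with $F_n$ as a proper free factor, so $F_n$ has infinite index in $F_{n+1}$. The index formula $[G:F_n] = [G:F_{n+1}]\cdot[F_{n+1}:F_n]$ therefore only re-derives what you already knew (that $F_n$ has infinite index in $G$), and says nothing about $[G:F_{n+1}]$. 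Containment goes the wrong way for your purposes: enlarging a subgroup can only decrease the index.

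The gap is nonetheless easy to close, just not by that route. If at some stage $F_{n+1}$ had finite index in $G$, then $G$ would be virtually free; being torsion-free, $G$ would then be free (Stallings--Swan). But a non-abelian free group trivially contains quasiconvex free subgroups of every finite rank and of infinite index (any finitely generated subgroup of a free group is quasiconvex), so the corollary holds in that case without any induction. Alternatively, one can use the ``infinitely many $g$'' clause of Theorem~\ref{thm:quasi} to argue that some choice of $g$ yields an infinite-index $F_{n+1}$. Either patch works; the one you wrote does not.
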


Recall that for a graph $B$, a subgraph $A \subset B$ is \emph{full} if whenever vertices $a_1, a_2 \in A$ are joined by an edge $e$ of $B$, then $e \subset A$. In other words, $A$ is the subgraph of $B$ induced by $A^0$.
A map $X \rightarrow Y$ between cell complexes is \emph{combinatorial} if it maps cells to cells of the same dimension.
An \emph{immersion} is a local injection.

\begin{definition}\label{def:locisom}
A \emph{local isometry} $\varphi: Y \rightarrow X$ between non-positively curved cube complexes is a combinatorial map such that for each $y\in Y^0$ and $x=\varphi(y)$, the induced map $\varphi:link(y)\rightarrow link(x)$ is an injection of a full subgraph. 
\end{definition}

A more visual way to think about local isometries is the following: an immersion $\varphi$ is a local isometry if whenever two edges $\varphi(e), \varphi(f)$ form the corner of a square in $X$, then $e,f$ already formed the corner of a square in $Y$.

A key property of local isometries is that they are $\pi_1$-injective. It is then natural to ask \emph{which} subgroups of the fundamental group of a non-positively curved cube complex can be realised by local isometries. In the setting of non-positively curved cube complexes with hyperbolic fundamental group, one large class of subgroups having this property is the class of quasiconvex subgroups. This is proved in~\cite{HaglundGraphProduct}, and collected in~\cite[2.31 and 2.38]{WiseIsraelHierarchy} as presented below.

\begin{definition}\label{def:super}
A subspace $\widetilde{Y} \subset \widetilde{X}$ is \emph{superconvex} if it is convex and for every bi-infinite geodesic line $L$, if $L \subset N_r(\widetilde{Y})$ for some $r >0$, then $L \subset \widetilde{Y}$. A map $Y \rightarrow X$ is \emph{superconvex} if the induced map between universal covers $\widetilde{Y} \rightarrow \widetilde{X}$ is an embedding onto a superconvex space.
\end{definition}

\begin{proposition}\label{prop:loc} Let $X$ be a compact non-positively curved cube complex with $\pi_1X$ hyperbolic. Let $H < \pi_1X$ be a quasiconvex subgroup and let $C \subset \widetilde X$ be a compact subspace. Then there exists a superconvex $H$-cocompact subspace $\widetilde{Y} \subset \widetilde{X}$ with $C \subset \widetilde{Y}$.
\end{proposition}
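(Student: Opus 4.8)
The plan is to build $\widetilde Y$ as a neighborhood of a convex hull, exploiting the fact that quasiconvex subgroups of hyperbolic groups act cocompactly on suitable convex subcomplexes, and then enlarge to absorb both $C$ and the "fellow-traveling geodesics" condition that upgrades convexity to superconvexity. First I would recall that since $H<\pi_1 X$ is quasiconvex and $\pi_1 X$ is hyperbolic, the orbit $H\widetilde x_0$ of a basepoint is quasiconvex in $\widetilde X$; taking the combinatorial convex hull of this orbit together with $C$ gives an $H$-invariant convex subcomplex on which $H$ acts cocompactly (cocompactness uses quasiconvexity plus hyperbolicity — the hull of a quasiconvex set stays within bounded distance of the set). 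Call this $\widetilde Y_0$. It is convex and $H$-cocompact and contains $C$, but need not be superconvex.

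Next I would address superconvexity. The issue is a bi-infinite geodesic line $L$ with $L\subset N_r(\widetilde Y_0)$: I want to thicken $\widetilde Y_0$ so that all such $L$ actually lie inside. The key point is that, by hyperbolicity, any geodesic line that $r$-fellow-travels a convex set $\widetilde Y_0$ must $\delta'$-fellow-travel it for a uniform $\delta'=\delta'(r,\delta)$ — indeed in a $\delta$-hyperbolic space a geodesic staying in a bounded neighborhood of a convex set stays in a neighborhood whose radius depends only on $\delta$, not on $r$. So it suffices to take $\widetilde Y = N_{\delta'}(\widetilde Y_0)$, the combinatorial $\delta'$-neighborhood, and check it is still convex (neighborhoods of convex subcomplexes in a CAT(0) cube complex are convex — this is standard, e.g. via the fact that the cubical neighborhood of a convex subcomplex is convex), still $H$-cocompact (it is a bounded neighborhood of an $H$-cocompact set), and still contains $C$. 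Then any $L\subset N_r(\widetilde Y)$ lies in $N_{r+\delta'}(\widetilde Y_0)$, hence by the uniform-fellow-traveling fact in $N_{\delta'}(\widetilde Y_0)=\widetilde Y$, as desired. One must be slightly careful that "combinatorial neighborhood" and "metric neighborhood" are comparable and that the resulting subspace is a genuine subcomplex; passing to a cubical neighborhood (the union of all cubes meeting the metric neighborhood) handles this.

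Finally I would record that $\widetilde Y\subset\widetilde X$ being an $H$-invariant, $H$-cocompact, superconvex subcomplex means the quotient $Y=\widetilde Y/H\to X$ is the desired map, i.e. this packaging is exactly the content of Proposition~\ref{prop:loc} (and is what is cited from \cite{HaglundGraphProduct} and \cite[2.31, 2.38]{WiseIsraelHierarchy}). The main obstacle is the superconvexity step: getting a genuinely \emph{uniform} bound $\delta'$ independent of $r$ for how far a shallow bi-infinite geodesic can stray from a convex subcomplex, and simultaneously ensuring the thickened set remains a convex subcomplex rather than just a convex subspace. Both are known facts about CAT(0) cube complexes and hyperbolic spaces, so in the write-up I would either cite them directly or, if a self-contained argument is wanted, prove the fellow-traveling bound via a standard thin-triangles/quadrilaterals estimate and the convexity of cubical neighborhoods via a link-condition argument.
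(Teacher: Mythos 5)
The paper itself does not prove this proposition; it cites \cite{HaglundGraphProduct} and \cite[2.31, 2.38]{WiseIsraelHierarchy}. Your sketch reconstructs the standard argument behind those references — take the combinatorial convex hull of a quasiconvex $H$-invariant set (cocompactness of the hull being Haglund's quasiconvexity theorem for hyperbolic cube complexes) and then thicken by a uniform amount to get superconvexity — and the key superconvexity mechanism (a bi-infinite geodesic that $r$-fellow-travels a convex set in a $\delta$-hyperbolic space must actually fellow-travel it within a constant depending only on $\delta$, not on $r$) is exactly right.

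One small but real gap in the write-up: the convex hull of $H\widetilde x_0\cup C$ need not be $H$-invariant, because $C$ is not an $H$-invariant set and the hull operation only commutes with the $H$-action on $H$-invariant inputs. The fix is to take the hull of $H(\widetilde x_0\cup C)$, or equivalently of $H\cdot B(\widetilde x_0,R)$ for any $R$ with $C\subset B(\widetilde x_0,R)$; this set is $H$-invariant, and since $C$ is compact it lies within bounded Hausdorff distance of the orbit $H\widetilde x_0$, so it is still quasiconvex and Haglund's cocompactness result still applies to its hull. Your other caveat — that combinatorial $k$-balls around convex subcomplexes are generally not convex, so one should instead iterate the cubical-neighborhood operation (union of closed cubes meeting the set), which does preserve convexity and dominates the combinatorial neighborhood up to a factor of $\dim X$ — is exactly the right repair, and is how the cited sources handle it. With those two adjustments your proposal is a correct proof.
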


\begin{proposition}\label{cor:loci}
Let $X$ be a compact non-positively curved cube complex with $\pi_1X$ hyperbolic. Let $H < \pi_1X$ be a quasiconvex subgroup. Then there exists a local-isometry $Y \rightarrow X$ with $\pi_1 Y = H$. 
\end{proposition}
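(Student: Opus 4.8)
The plan is to deduce this directly from Proposition~\ref{prop:loc} by passing to a quotient. Fix a vertex $C \subset \widetilde{X}$ and let $\widetilde{Y} \subset \widetilde{X}$ be the subcomplex produced by Proposition~\ref{prop:loc}; for the present purpose I only use that $\widetilde{Y}$ is convex and $H$-cocompact (superconvexity is not needed here). Since $\pi_1 X$ acts freely and properly discontinuously on $\widetilde{X}$, so does $H$, and $H$ preserves $\widetilde{Y}$; hence $Y := H\backslash\widetilde{Y}$ is a compact cube complex, the quotient map $\widetilde{Y} \to Y$ is a covering with deck group $H$, and the $H$-equivariant inclusion $\widetilde{Y} \hookrightarrow \widetilde{X}$ descends to a combinatorial map $\varphi : Y \to X$. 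I claim $\varphi$ is the required local isometry.

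First, $\widetilde{Y}$, being a convex subcomplex of the CAT(0) cube complex $\widetilde{X}$, is itself a CAT(0) cube complex, hence simply connected; in particular $Y$ is non-positively curved, the covering $\widetilde{Y} \to Y$ is universal, and $\pi_1 Y \cong H$. The commuting square formed by $\widetilde{Y} \hookrightarrow \widetilde{X}$, $\widetilde{Y} \to Y$, $\widetilde{X}\to X$, and $\varphi$ identifies the map $\pi_1 Y \to \pi_1 X$ induced by $\varphi$ with the inclusion $H \hookrightarrow \pi_1 X$, so $\pi_1 Y = H$ as a subgroup of $\pi_1 X$.

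It remains to verify that $\varphi$ is a local isometry in the sense of Definition~\ref{def:locisom}. It is a combinatorial immersion: near any vertex it is the composition of a local section of the covering $\widetilde{Y}\to Y$, the embedding $\widetilde{Y}\hookrightarrow\widetilde{X}$, and the covering $\widetilde{X}\to X$, each of which is locally injective. For the link condition, since $\widetilde{Y}\to Y$ and $\widetilde{X}\to X$ restrict to isomorphisms on vertex links, it suffices to check that for each $v\in\widetilde{Y}^0$ the inclusion $\link(v,\widetilde{Y}) \hookrightarrow \link(v,\widetilde{X})$ is onto a full subgraph. This is exactly Haglund's characterisation of local convexity of a subcomplex of a non-positively curved cube complex in terms of links, and it holds here because $\widetilde{Y}$ is convex; equivalently, in the ``corner of a square'' language following Definition~\ref{def:locisom}, if two edges of $\widetilde{Y}$ span the corner of a square of $\widetilde{X}$ then convexity forces that square into $\widetilde{Y}$.

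I do not expect a genuine obstacle: all of the substance — producing a convex, $H$-cocompact $\widetilde{Y}$ inside $\widetilde{X}$ — is already contained in Proposition~\ref{prop:loc}, and the rest is the routine verification above. The only points needing care are the freeness and proper discontinuity of the $H$-action (so that $Y$ is an honest cube complex covered by $\widetilde{Y}$, and $\pi_1 Y$ is correctly identified) and the link characterisation of convexity that makes the inclusion a local isometry.
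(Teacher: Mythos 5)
Your proof is correct and is essentially the standard derivation of the quasiconvex cocompact core result, which is what Haglund and Wise do; the paper itself does not reprove Proposition~\ref{cor:loci} but simply cites it (to~\cite{HaglundGraphProduct} and~\cite[2.38]{WiseIsraelHierarchy}) as a companion to Proposition~\ref{prop:loc}, so there is no in-paper argument to contrast with. Your reduction to the link-fullness characterisation of convexity, and your observation that superconvexity is superfluous for this particular corollary, are both accurate; the only minor point worth making explicit is that $\widetilde{Y}$ is taken to be a \emph{full} convex subcomplex (as it is in Wise's conventions and in the output of Proposition~\ref{prop:loc}), which is what lets you promote ``the diagonal of the square lies in $\widetilde{Y}$'' to ``the square itself lies in $\widetilde{Y}$'' in the link-fullness step.
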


\subsection{Cubical small cancellation theory} Cubical presentations, cubical small cancellation theory, and many related notions were introduced in~\cite{WiseIsraelHierarchy}. We recall them below.

\begin{definition}
A \textit{cubical presentation} $\langle X|\{Y_i\}  \rangle$ consists of a connected non-positively curved cube complex $X$ together with a collection of local isometries of connected non-positively curved cube complexes $Y_i \overset{\varphi_i} \longrightarrow X$. In this setting, we shall think of $X$ as a ``generator" and of the $Y_i \rightarrow X$ as ``relators". The fundamental group of a cubical presentation is defined as $\pi_1 X/\nclose{\{\pi_1 Y_i\}}$. 

Associated to a cubical presentation $\langle X|\{\varphi_i:Y_i \rightarrow X \}  \rangle$ there is a \emph{coned-off space} $X^*$ obtained from $(X \cup \{Y_i \times [0,1]\})/\{(y_i, 1)\sim\varphi_i(y_i)\}$ by collapsing each $Y_i\times \{0\}$ to a point.  By the Seifert-Van Kampen Theorem, the group $\pi_1 X/\nclose{\{\pi_1 Y_i\}}$ is isomorphic to $\pi_1X^*$. Thus, the coned-off space is a presentation complex of sorts for  $\langle X|\{Y_i\}  \rangle$.
In practice, when discussing cubical presentations, we often have in mind the coned-off space $X^*$, rather than the abstract cubical presentation.
\end{definition}

\begin{remark}
A group presentation $\langle a_1,  \ldots, a_s | r_1, \ldots, r_m \rangle$  can be interpreted cubically  by letting $X$ be a bouquet of \textit{s} circles and letting each $Y_i$ map to the path determined by $r_i$. On the other extreme, for every non-positively curved cube complex $X$ there is a ``free" cubical presentation $X^*=\langle X| \ \rangle$ with fundamental group $\pi_1X=\pi_1X^*$.
\end{remark}

In the cubical setting, there are 2 types of pieces: \emph{wall-pieces} and \emph{cone-pieces}. Cone-pieces are very much like pieces in the classical sense -- they measure overlaps between relators in the presentation. On the other hand, wall-pieces measure the overlaps between cone-cells and \emph{rectangles} (hyperplane carriers) -- wall-pieces are always trivial in the classical case, since the square part of $X^*$ coincides with the $1$-skeleton of the presentation complex.

Precise definitions follow.

\begin{definition}[Elevations] Let $Y \rightarrow X$ be a map and $\hat X \rightarrow X$ a covering map. An \emph{elevation} $\hat Y \rightarrow \hat X$ is a map satisfying
\begin{enumerate}
\item the composition $\hat{Y} \rightarrow Y \rightarrow X$ equals $\hat{Y} \rightarrow \hat X \rightarrow X$, and
\item assuming all maps involved are basepoint preserving, $\pi_1 \hat{Y}$ equals the preimage of $\pi_1 \hat{X}$ in $\pi_1 Y$.
\end{enumerate}
\end{definition}

\begin{notation}
In the entirety of this text, a path $\sigma \rightarrow X$ is assumed to be a combinatorial path mapping to the 1-skeleton of $X$.
\end{notation}

\begin{definition}[Pieces]\label{def:pieces}
Let $\langle X | \{Y_i\} \rangle$ be a cubical presentation.
An \emph{abstract contiguous cone-piece} of $Y_j$ in $Y_i$ is an intersection $\widetilde{Y}_j \cap \widetilde{Y}_i$ where either $i \neq j$ or where $i = j$
but $\widetilde{Y}_j \neq \widetilde{Y}_i$. A \emph{cone-piece} of $Y_j$ in $Y_i$ is a path $p \rightarrow P$ in an abstract contiguous cone-piece of $Y_j$ in $Y_i$.
An \emph{abstract contiguous wall-piece} of $Y_i$ is an intersection $N(H) \cap \widetilde{Y}_i$ where $N(H)$ is
the carrier of a hyperplane $H$ that is disjoint from $\widetilde{Y}_i$. To avoid having to deal with empty pieces, we shall assume that $H$ is dual to an edge with an endpoint on $\widetilde{Y}_i$. A \emph{wall-piece} of $Y_i$ is a path $p \rightarrow P$ in an abstract contiguous wall-piece of $Y_i$\footnote{The ``abstract contiguous cone-piece" and ``abstract contiguous wall-piece" terminology comes from the fact that it is also a priori necessary to consider ``non-contiguous cone-pieces" and ``non-contiguous wall-pieces", however~\cite[3.7]{WiseIsraelHierarchy} shows that one can limit oneself to the analysis of contiguous pieces.}.

A \emph{piece} is either a cone-piece or a wall-piece.
\end{definition}

\begin{remark} In Definition~\ref{def:pieces}, two lifts of a cone $Y$ are considered identical if they differ by an element of $Stab_{\pi_1X}(\widetilde Y)$. This is in keeping with the conventions of classical small cancellation theory, where overlaps between a relator and any of its cyclic permutations are not regarded as pieces. This hypothesis facilitates replacing relators by their proper powers to achieve good small cancellation conditions in some cases. 
\end{remark}

The $C(p)$ and $C'(\frac{1}{p})$ conditions are now defined as in the classical case (making no distinction between the two types of pieces when counting them). Namely:

\begin{definition}
A cubical presentation $X^*$ satisfies the \textit{$C(p)$ small cancellation condition} if no essential closed path $\sigma \rightarrow X^*$ is the concatenation of fewer than $p$ pieces, and the $C’(\frac{1}{p})$ \textit{condition}  if whenever  $\mu  \rightarrow X^*$ is a piece in an essential closed path $\sigma \rightarrow X^*$, then $|\mu|< \frac{1}{p} |\sigma|$, where $|\mu|$ is the distance between endpoints of $\widetilde \mu \subset\ \widetilde X$.
\end{definition}

As in the classical case, if the fundamental group of $X$ in a cubical presentation $X^*= \langle X|\{Y_i\} \rangle$ is hyperbolic, sufficiently good small cancellation conditions lead to hyperbolicity. The following form of~\cite[4.7]{WiseIsraelHierarchy} follows immediately from the fact that a cubical presentation that is $C'(\frac{1}{\alpha})$  for $\alpha \geq 12$ can be endowed with a non-positively curved angling rule that satisfies the short innerpaths condition when $\alpha \geq 14$ (\cite[3.32 and 3.70]{WiseIsraelHierarchy}):

\begin{theorem}\label{thm:hyp}
Let $X^*$ be a cubical presentation satisfying the $C'(\frac{1}{p})$ small cancellation condition for $p \geq \frac{1}{14}$. Suppose $\pi_1 X$ is hyperbolic and $ X^*$ is compact. Then $\pi_1 X^*$ is hyperbolic.  
\end{theorem}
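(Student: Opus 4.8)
The plan is to deduce this from Wise's general hyperbolicity criterion for cubical presentations carrying a sufficiently good angling rule, so that all the work reduces to checking that the hypotheses of that criterion are met. Recall that an \emph{angling rule} assigns a real number to each corner of each square and each cone-cell that can appear in a reduced disc diagram over $X^*$; it is \emph{non-positively curved} if in every reduced diagram each interior vertex has curvature at most $0$ (equivalently, its incident angles sum to at least $2\pi$) and each interior square and interior cone-cell has curvature at most $0$; and it satisfies the \emph{short innerpaths} condition if every cone-cell meeting $\partial D$ in a single path (a shell) has inner path strictly shorter than its outer path. Once such an angling rule is available, the combinatorial Gauss--Bonnet theorem forces the total curvature of any reduced disc diagram $D$ over $X^*$ to equal $2\pi\chi(D)=2\pi$, so all the ``positive curvature'' is concentrated on $\partial D$; this is the engine of the argument.

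First I would invoke the two structural results cited in the paragraph preceding the statement. By \cite[3.32]{WiseIsraelHierarchy}, the $C'(\tfrac{1}{\alpha})$ hypothesis with $\alpha\geq 12$ equips $X^*$ with a non-positively curved angling rule, and by \cite[3.70]{WiseIsraelHierarchy}, since the hypothesis makes $\alpha$ large enough ($\alpha\geq 14$), that same angling rule satisfies the short innerpaths condition. With these two facts in hand, the hyperbolicity of $\pi_1 X^*$ is exactly the content of \cite[4.7]{WiseIsraelHierarchy}, whose mechanism I would recall: given an essential closed path $\sigma\to X^*$, pick a minimal-area disc diagram $D\to X^*$ with $\partial D=\sigma$; minimality guarantees that $D$ is reduced, so the angling-rule hypotheses apply to it. Applying Gauss--Bonnet and then analyzing the boundary cells of $D$ --- spurs, corners of the square part, and shells, the last controlled precisely by the short innerpaths condition --- yields a bound of the form $\#\{\text{cone-cells of }D\}\le C\,|\sigma|$.

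To convert this into hyperbolicity of $\pi_1 X^*$ one upgrades the cone-cell count to a genuine linear isoperimetric inequality. Since $X^*$ is compact there are only finitely many cone types, each of uniformly bounded diameter and complexity; and since $\pi_1 X$ is hyperbolic, the maximal square subdiagrams of $D$ lying between cone-cells satisfy a linear isoperimetric inequality inside $X$. Combining these with the cone-cell bound gives $\area(D)\le C'(|\sigma|+1)$. As $X^*$ is a compact complex, $\pi_1 X^*\cong\pi_1 X/\nclose{\{\pi_1 Y_i\}}$ acts geometrically on the simply connected complex $\widetilde{X^*}$, and a group acting geometrically on a simply connected complex with a linear combinatorial isoperimetric function is hyperbolic; hence $\pi_1 X^*$ is hyperbolic.

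The main obstacle is not a step we must carry out ourselves: the construction and non-positivity of the angling rule \cite[3.32]{WiseIsraelHierarchy} and the verification of the short innerpaths condition \cite[3.70]{WiseIsraelHierarchy} are the genuinely technical inputs, and the small cancellation hypothesis is stated exactly so that we may quote them verbatim. The only points requiring care on our side are the passage from the combinatorial cone-cell count to an honest isoperimetric inequality --- one must simultaneously control the square parts of $D$ (using hyperbolicity of $\pi_1 X$) and the internal geometry of the cones (using compactness of $X^*$) --- and the observation that area-minimizing diagrams over $X^*$ are reduced, so that the angling-rule machinery is applicable in the first place. All of this is carried out in \cite[\S4]{WiseIsraelHierarchy}, whence the theorem.
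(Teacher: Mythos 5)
Your proposal is correct and follows the same route the paper takes: the paper's ``proof'' is precisely the sentence preceding the theorem, which cites \cite[3.32 and 3.70]{WiseIsraelHierarchy} to produce a non-positively curved angling rule satisfying short innerpaths from the $C'(\tfrac{1}{14})$ hypothesis and then invokes \cite[4.7]{WiseIsraelHierarchy} for hyperbolicity. Your write-up simply unpacks the Gauss--Bonnet mechanism and the passage to a linear isoperimetric inequality inside those cited results, which is accurate but not a different argument; note also that the ``$p\geq\frac{1}{14}$'' in the statement is a typo for $p\geq 14$, which you correctly read through.
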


\begin{definition}
A collection  $\{H_1, \ldots , H_r\}$ of subgroups 
of a group $G$ is \emph{malnormal} provided that $H^g_i\cap H_j ={1}$ unless $i=j$ and $g \in H_i$.
\end{definition}

Compactness, malnormality and superconvexity will together guarantee the existence of a uniform bound on the size of both cone-pieces and wall-pieces. This is the content of~\cite[2.40 and 3.52]{WiseIsraelHierarchy}, which we recall below:

\begin{lemma}\label{lem:maln}
Let $X$ be a non-positively curved cube complex with $\pi_1X$ hyperbolic. For $1 \leq i \leq r$, let $Y_i \rightarrow X$ be a
local-isometry with $Y_i$ compact, and assume the collection $\{\pi_1Y_1, \ldots ,\pi_1Y_r\}$ is malnormal.
Then there is a uniform upper bound $D$ on the diameters of intersections
$g\widetilde Y_i \cap h\widetilde Y_j$ between distinct $\pi_1X$-translates of their universal covers in $\widetilde X$.
\end{lemma}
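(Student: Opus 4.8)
The plan is to deduce the uniform bound from the quasiconvexity of the subgroups $\pi_1 Y_i$ in the hyperbolic group $\pi_1 X$, combined with the standard fact that the coarse intersection of two quasiconvex cosets in a hyperbolic group is, up to bounded Hausdorff distance, a coset of the corresponding conjugated intersection subgroup --- which, by malnormality, is trivial whenever the two translates are genuinely distinct.

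\emph{Setup.} Since $Y_i \to X$ is a local isometry of non-positively curved cube complexes, the induced map $\widetilde Y_i \to \widetilde X$ embeds $\widetilde Y_i$ onto a convex subcomplex, which I henceforth identify with $\widetilde Y_i$; thus every $\pi_1 X$-translate $g\widetilde Y_i$ is convex. Because $Y_i$ is compact, $\widetilde Y_i$ is $\pi_1 Y_i$-cocompact, and since $\pi_1 X$ acts freely on $\widetilde X$ a covering-space argument shows that $Stab_{\pi_1 X}(\widetilde Y_i)$ contains $\pi_1 Y_i$ with finite index; discarding the indices for which $\pi_1 Y_i$ is finite (there $\widetilde Y_i$ is compact, and any intersection involving $\widetilde Y_i$ has diameter at most $\mathrm{diam}\,\widetilde Y_i$, contributing trivially to $D$) and invoking malnormality, one further obtains $Stab_{\pi_1 X}(\widetilde Y_i) = \pi_1 Y_i$. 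As $X$ is compact, $\widetilde X$ is $\delta$-hyperbolic with $\pi_1 X$ acting geometrically, and a cocompact convex subcomplex is quasiconvex, so each $\pi_1 Y_i$ is a quasiconvex subgroup of $\pi_1 X$. Fixing a basepoint and a $\pi_1 X$-equivariant quasi-isometry from $\widetilde X$ to the Cayley graph of $\pi_1 X$, the subcomplex $g\widetilde Y_i$ lies within a uniform Hausdorff distance of the coset $g\,\pi_1 Y_i$, and hence the combinatorial diameter of $g\widetilde Y_i \cap h\widetilde Y_j$ agrees, up to a fixed multiplicative and additive error, with the diameter of the $\epsilon$-coarse intersection of $g\,\pi_1 Y_i$ and $h\,\pi_1 Y_j$ in the Cayley graph, for a suitable uniform $\epsilon$.

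\emph{Main step.} In a hyperbolic group, the $\epsilon$-coarse intersection of two quasiconvex cosets $g_1 H_1$ and $g_2 H_2$ lies in a uniformly bounded neighbourhood of a single coset of $g_1 H_1 g_1^{-1} \cap g_2 H_2 g_2^{-1}$; this follows from thinness of quasigeodesic polygons together with the fact that the coarse intersection is coarsely invariant under that conjugated intersection. I would apply this with $H_1 = \pi_1 Y_i$ and $H_2 = \pi_1 Y_j$. If $g\widetilde Y_i \neq h\widetilde Y_j$, then the subgroup $g\,\pi_1 Y_i\, g^{-1} \cap h\,\pi_1 Y_j\, h^{-1}$ is trivial: when $i \neq j$ this is immediate from malnormality, and when $i = j$ distinctness of the translates forces $g^{-1}h \notin Stab_{\pi_1 X}(\widetilde Y_i) = \pi_1 Y_i$, so malnormality again gives triviality. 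Therefore the coarse intersection of $g\,\pi_1 Y_i$ and $h\,\pi_1 Y_j$ has diameter bounded by a constant depending only on $\delta$, on the quasiconvexity and cocompactness constants, and on the malnormality constant; transporting this bound back through the quasi-isometry produces the required uniform $D$.

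\emph{Expected main obstacle.} The heart of the matter is the coarse-intersection statement used in the Main step: proving, with constants depending only on the stated data, that the set of points $\epsilon$-close to two quasiconvex cosets is within bounded distance of a coset of their conjugated intersection. This is the step that genuinely uses hyperbolicity, and it has to be carefully reconciled with the bookkeeping relating $\pi_1 Y_i$ to $Stab_{\pi_1 X}(\widetilde Y_i)$ and with the passage between the combinatorial metric on $\widetilde X$ and the word metric on $\pi_1 X$. The remaining steps are routine.
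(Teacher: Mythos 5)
The paper does not supply a proof of this lemma; it is quoted verbatim from Wise's cubical small-cancellation work (cited there as 2.40 and 3.52), so there is no internal argument to compare against. Judged on its own terms, your proof is correct. The structure is sound: a local isometry between compact NPC cube complexes does lift to a convex embedding $\widetilde Y_i \hookrightarrow \widetilde X$, convexity plus cocompactness of the $\pi_1 Y_i$-action gives quasiconvexity, and your identification $Stab_{\pi_1X}(\widetilde Y_i)=\pi_1 Y_i$ is justified in the infinite case exactly as you say (finite index forces an infinite intersection of $\pi_1 Y_i$ with a conjugate, and malnormality closes the loop), while the finite-$\pi_1Y_i$ case is handled separately by compactness of $\widetilde Y_i$. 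The bookkeeping $g\widetilde Y_i \neq h\widetilde Y_j \Rightarrow g\pi_1Y_i g^{-1}\cap h\pi_1Y_j h^{-1}=1$ is also correct in both the $i\neq j$ and $i=j$ cases.

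The load-bearing ingredient is the uniform coarse-intersection statement for quasiconvex cosets in a hyperbolic group, which you flag as the main obstacle. This is a genuine theorem but a standard one (it is the key lemma of Gitik--Mitra--Rips--Sageev on widths of subgroups, and appears in various forms in Hruska, Martínez-Pedroza, and others): there is a function $C(\epsilon)$, depending only on $\delta$ and the quasiconvexity constants, such that whenever $\mathrm{diam}\bigl(N_\epsilon(g_1H_1)\cap N_\epsilon(g_2H_2)\bigr)>C(\epsilon)$, the conjugated intersection $g_1H_1g_1^{-1}\cap g_2H_2g_2^{-1}$ is infinite. The contrapositive, combined with the triviality you derive from malnormality, yields the uniform $D$ once the errors from the quasi-isometry $\widetilde X \to \mathrm{Cay}(\pi_1X)$ are absorbed. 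You use the stronger ``close to a single coset'' formulation, but only the diameter bound is actually needed, so the weaker GMRS-type statement would suffice and would make the appeal to the black box slightly lighter.

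A remark on style rather than correctness: Wise's own argument for the analogous statement proceeds by a direct compactness/limiting argument on configurations of translates in $\widetilde X$ rather than by translating into the Cayley graph and invoking coarse-intersection machinery. Both routes rest on the same pillars (quasiconvexity, malnormality, thin polygons), but yours outsources more to cited background while Wise's is more self-contained and stays in the cube complex. Either is acceptable; yours has the advantage of isolating precisely which standard fact about quasiconvex subgroups is doing the work.
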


\begin{lemma}\label{lem:strips}
Let $Y$ be a superconvex cocompact subcomplex of a CAT(0) cube
complex $X$. There exists $D\geq 0$ such that the following holds: 
For each $n \geq 0$, if $I_1 \times I_n\rightarrow X$ is a combinatorial strip whose base ${0}\times I_n$ lies in $Y$,
and such that $\dist((0, 0), (0, n))\geq D$,  then $I_1 \times I_n$ lies in $Y$.
\end{lemma}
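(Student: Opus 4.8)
The plan is to reduce the claim to a single statement about one hyperplane, and then to prove that statement by a compactness argument built on superconvexity together with cocompactness of $Y$.

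\emph{Step 1: reduction to one hyperplane.} The edges of the strip joining $(0,j)$ to $(1,j)$ are opposite sides of its squares, so they all map to edges of $X$ dual to a common hyperplane $V$. Consequently the base $\{0\}\times I_n$ and the opposite side $\{1\}\times I_n$ cross exactly the same hyperplanes of $X$, none of which is $V$; in particular each is a combinatorial geodesic (a combinatorial strip has geodesic sides), the base lying on one side $\mathfrak{h}_0$ of $V$ and $\{1\}\times I_n$ on the other side $\mathfrak{h}_1$. Using the standard fact that a convex subcomplex of a CAT(0) cube complex is the intersection of the halfspaces containing it, I would then check that the strip lies in $Y$ if and only if $V$ crosses $Y$: if $H\neq V$ is a hyperplane with $Y$ on one side, then the edge joining $(0,j)$ to $(1,j)$, being dual to $V$, does not cross $H$, so $(1,j)$ lies on the same side of $H$ as $(0,j)\in Y$; since a square all of whose vertices lie in a halfspace lies in that halfspace, the strip then lies in every halfspace containing $Y$, hence in $Y$. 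Conversely, if $V$ does not cross $Y$, then $Y$ lies on one side of $V$ --- necessarily $\mathfrak{h}_0$, as the base does --- while $\{1\}\times I_n\subseteq\mathfrak{h}_1$ is disjoint from $\mathfrak{h}_0\supseteq Y$, so the strip is not contained in $Y$. Since the base is a geodesic of length $\dist((0,0),(0,n))$ contained in $Y$, it therefore suffices to produce $D\geq 0$ such that: whenever a combinatorial geodesic $\beta\subseteq Y$ of length at least $D$ runs alongside a hyperplane $V$ (every vertex of $\beta$ being an endpoint of an edge of $X$ dual to $V$, with $\beta$ not crossing $V$), the hyperplane $V$ crosses $Y$.

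\emph{Step 2: the compactness argument.} Suppose no such $D$ exists. Then for every $m$ there is a combinatorial geodesic $\beta_m\subseteq Y$ of length at least $m$ running alongside a hyperplane $V_m$ that does not cross $Y$, so that $Y$ lies on the side of $V_m$ containing $\beta_m$. Let $H$ be a group acting on $X$, preserving $Y$, with $Y/H$ compact, and fix a finite subcomplex $K\subseteq Y$ whose $H$-translates cover $Y$. After replacing each $\beta_m$ by an $H$-translate we may assume that its midpoint vertex lies in $K$, and --- passing to a subsequence, $K$ being finite --- equals a fixed vertex $c$. As $X$ is locally finite (which we may assume) and the $\beta_m$ are geodesic segments through $c$ whose lengths on either side of $c$ tend to infinity, a further subsequence converges, uniformly on compacta, to a bi-infinite geodesic line $L$; since $Y$ is closed, $L\subseteq Y$. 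Each $V_m$ is dual to an edge of $X$ incident to $c$, and there are only finitely many such hyperplanes, so along a further subsequence $V_m$ is a fixed hyperplane $V_\infty$; then $V_\infty$ does not cross $Y$, $Y$ lies on one side $\mathfrak{h}_0(V_\infty)$ of $V_\infty$, and $L$ runs alongside $V_\infty$ with $L\subseteq Y\subseteq\mathfrak{h}_0(V_\infty)$. Let $L'$ be the parallel copy of $L$ obtained by crossing $V_\infty$ (the far endpoints of the edges joining the vertices of $L$ to $V_\infty$); by standard properties of hyperplane carriers $L'$ is again a bi-infinite geodesic line, and $L'\subseteq N_1(L)\subseteq N_1(Y)$. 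Superconvexity of $Y$ (applied with $r=1>0$) then forces $L'\subseteq Y\subseteq\mathfrak{h}_0(V_\infty)$, contradicting $L'\subseteq\mathfrak{h}_1(V_\infty)$. This contradiction produces the desired $D$ and completes the proof.

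\emph{Expected main obstacle.} The delicate point is Step 2: extracting simultaneously a limiting bi-infinite geodesic and a limiting hyperplane (this is where local finiteness of $X$ is used), verifying that forming the parallel copy across a hyperplane commutes with this limiting process (so that $L'$ really is the limit of the parallel copies of the $\beta_m$), and tracking the two sides of $V_\infty$ so that $L$ and its parallel copy $L'$ end up on opposite sides. By contrast, the ingredients used in Step 1 --- that a convex subcomplex is the intersection of its halfspaces, that the sides of a combinatorial strip are geodesics whose transverse edges are dual to a single hyperplane, and that transporting a geodesic across a hyperplane yields a geodesic --- are standard and routine to verify.
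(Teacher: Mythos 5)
This is Lemma 3.52 of Wise's Israel Hierarchy monograph, which the paper cites without reproof; your argument is essentially the same as Wise's (pass to a one-hyperplane statement, then derive a contradiction from superconvexity via a compactness/limiting argument on longer and longer bad strips translated into a fixed compact fundamental domain), so I will only flag a couple of points that need shoring up.

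The main one is your assertion in Step~1 that ``a combinatorial strip has geodesic sides.'' That is not automatic: the hypothesis of the lemma is $\dist((0,0),(0,n))\geq D$ rather than $n\geq D$, precisely because the base $\{0\}\times I_n$ may fail to be geodesic. Two hyperplanes of a CAT(0) cube complex can cross in more than one square, so a strip with all transverse edges dual to $V$ can perfectly well revisit a hyperplane $H\neq V$. Fortunately this is a local repair rather than a structural one: since the carrier $N(V)$ and $Y$ are both combinatorially convex, a combinatorial geodesic $\beta$ from $(0,0)$ to $(0,n)$ lies in $Y\cap N(V)$; and in the only case that matters (when $V$ misses $Y$, so $Y$ lies on one side of $V$), $\beta$ does not cross $V$, hence lies in the copy of $V$ in $\partial N(V)$ on the $Y$-side and so ``runs alongside'' $V$ in your sense, with $|\beta|=\dist((0,0),(0,n))\geq D$. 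With $\beta$ in place of the base, Step~2 goes through verbatim. A second, smaller point: the subsequence step where you fix $V_\infty$ uses local finiteness of $X$ at $c$, not merely cocompactness of $Y$; you do flag this (``which we may assume''), and it is harmless in the setting of the paper where $X$ is the universal cover of a compact cube complex, but it is worth noting as a hypothesis you are quietly importing. The remaining steps --- convergence of $\beta_m$ on compacta to a bi-infinite geodesic $L\subset Y$, $L$ running alongside the limiting $V_\infty$, the parallel copy $L'\subset\partial N(V_\infty)$ being a geodesic because $N(V_\infty)\cong V_\infty\times[-1,1]$ is convex, $L'\subset N_1(Y)$ forcing $L'\subset Y$ by superconvexity, contradiction with $L'$ lying strictly on the far side of $V_\infty$ --- are correct and are exactly the ingredients of Wise's proof.
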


Recall that a \emph{wallspace} is a set $X$ together with a collection of \emph{walls} $\{W_i\}_{i \in I}=\mathcal{W}$ where $W_i=\{\overleftarrow W_i,\overrightarrow W_i\}$ and $\overleftarrow W_i,\overrightarrow W_i \subset X$ for each $i \in I$, and such that:
\begin{enumerate}
    \item $\overleftarrow W_i\cup \overrightarrow W_i=X$ and
    \item $\overleftarrow W_i\cap \overrightarrow W_i=\emptyset$.
\end{enumerate}
Moreover, $\mathcal{W}$ satisfies a \emph{finiteness property}: For every $p,q \in X$ the number of walls separating $p$ and $q$, denoted by $\#_{\mathcal{W}}(p,q)$, is finite. The $\overleftarrow W_i, \overrightarrow W_i$ above are the \emph{half-spaces} of $W_i$.

Once we have specified a cubical presentation, we will cubulate its fundamental group $\pi_1X^*$ via Sageev's construction, which produces a CAT(0) cube complex that is dual to a wallspace. We will assume the reader is familiar with this procedure. Good references include~\cite{Sageev95, ChatterjiNiblo04, GGTbook14}. Cocompactness of the action on the dual cube complex will readily follow from Proposition~\ref{prop:coco}, which is a well-known result of Sageev (\cite{Sageev97}). Properness is more delicate, and will follow from Theorem~\ref{thm:proper} once we know that $\pi_1X^*$ is hyperbolic, since in that case $\pi_1X^*$ has no infinite torsion subgroups. 

\begin{proposition}\label{prop:coco} Let $G$ be hyperbolic and $\{H_1, \ldots, H_n\}$ be a collection of quasiconvex subgroups. Then the action of $G$ on the dual CAT(0) cube complex $C$ is cocompact.
\end{proposition}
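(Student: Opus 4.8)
The plan is to follow the structure of Sageev's proof (\cite{Sageev97}; see also \cite{Sageev95}) and record the inputs it requires in our setting. Set up the wallspace so that its underlying set is a Cayley graph of the (finitely generated, $\delta$-hyperbolic) group $G$ — in the cube-complex applications of interest one takes $\widetilde{X}^{(0)}$, on which $G$ acts properly and cocompactly, with each quasiconvex $H_i$ carried by a superconvex, $H_i$-cocompact subcomplex $\widetilde{Y}_i$ via Propositions~\ref{prop:loc} and~\ref{cor:loci} — and let its walls be the $G$-translates of the $n$ walls associated to $H_1,\dots,H_n$, so that $\mathcal W$ has finitely many $G$-orbits. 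Fix a base vertex $y_0$ and, for a vertex $y$, write $\sigma_y$ for the canonical $0$-cube orienting every wall towards $y$; then $g\,\sigma_{y_0}=\sigma_{gy_0}$, the set $\{\,\sigma_y : y\,\}\subset C^{(0)}$ is $G$-invariant with finitely many $G$-orbits, and $d_C(\sigma,\tau)$ equals the number of walls on which the $0$-cubes $\sigma,\tau$ disagree. Since a cube of $C$ is determined by one of its vertices together with a finite set of pairwise-crossing walls that are flippable there, cocompactness follows as soon as we know that $C$ is locally finite and that $C^{(0)}$ has finitely many $G$-orbits; for the latter it is enough that $\{\,\sigma_y : y\,\}$ be coarsely dense in $C^{(0)}$.

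First I would dispose of the two finiteness statements using hyperbolicity and quasiconvexity. That $\dim C<\infty$ reduces to bounding the size of a family of pairwise-crossing walls: such a family produces translates $g_k\widetilde{Y}_{i_k}$ that pairwise cross deeply, and in the $\delta$-hyperbolic space they must all lie in a uniformly bounded neighbourhood of a common vertex, so a diameter bound of the type in Lemma~\ref{lem:maln} limits how many distinct translates can occur — that is, $\{H_1,\dots,H_n\}$ has finite width. Local finiteness of $C$ reduces to bounding, for each $0$-cube $\sigma$, the number of walls flippable at $\sigma$; such walls all pass within a bounded distance of the vertex that coarsely realises $\sigma$, so the bounded-packing property of quasiconvex subgroups of a hyperbolic group bounds their number. (Finite width and bounded packing in fact give that $C$ is of bounded dimension and uniformly locally finite, though only the weaker statements are needed here.)

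The crux is the coarse density of $\{\,\sigma_y : y\,\}$ in $C^{(0)}$, which is the substance of Sageev's theorem. Given a $0$-cube $\sigma$, it disagrees with $\sigma_{y_0}$ on a finite set $\mathcal T$ of walls, and the halfspaces $\{\,\sigma(W) : W\in\mathcal T\,\}$ pairwise intersect while all avoiding $y_0$; the goal is to produce a vertex $y$ with $d_C(\sigma,\sigma_y)$ bounded independently of $\sigma$ — morally a vertex compatible with the coarse intersection of those halfspaces — after which the cocompactness of the $G$-action on the underlying set immediately furnishes $g\in G$ with $gy_0$ close to $y$, hence $d_C(\sigma,g\,\sigma_{y_0})$ bounded. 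Making this precise is exactly where one leverages $\delta$-hyperbolicity together with the finite-width and bounded-packing consequences of quasiconvexity to control the combinatorics of pairwise-intersecting quasiconvex halfspaces, and I expect it to be the main obstacle; rather than reprove it I would invoke Sageev's argument in~\cite{Sageev97}. Once coarse density is established, $C^{(0)}$ has finitely many $G$-orbits, and together with local finiteness this yields finitely many $G$-orbits of cubes, so the action of $G$ on $C$ is cocompact.
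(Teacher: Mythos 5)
The paper does not give its own proof of this proposition; it is stated as a well-known result and attributed directly to Sageev~\cite{Sageev97}, with no argument supplied. Your proposal does the same thing in substance — you lay out the scaffolding of Sageev's argument (the wallspace on a $G$-cocompact model with walls the $G$-translates of the walls associated to the $H_i$, the canonical orientations $\sigma_y$, finite width giving a dimension bound, bounded packing giving local finiteness, coarse density of $\{\sigma_y\}$ in $C^{(0)}$ as the crux) and then correctly identify coarse density as the genuinely hard step, for which you defer to Sageev rather than reproving it. So your route and the paper's are essentially the same, namely citation of~\cite{Sageev97}; your version is just a more informative roadmap, and the reductions you list (finite width and bounded packing as the consequences of quasiconvexity in a hyperbolic group, and coarse density plus local finiteness yielding finitely many orbits of cubes) are the correct ones.
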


Before stating Theorem~\ref{thm:proper}, we need some definitions.

\begin{definition}
Let $Y \rightarrow X$ be a local isometry. $Aut_X(Y)$ is the group of automorphisms $\psi: Y \rightarrow Y$ such that the diagram below is commutative:
\[\begin{tikzcd}
Y \arrow[r, "\psi"] \arrow[rd] & Y \arrow[d] \\
                               & X          
\end{tikzcd}\]
If $Y$ is simply connected, then $Aut_X(Y)$ is equal to $Stab_{\pi_1X}(Y)$.
\end{definition}

\begin{definition}\label{def:b6}  A cubical presentation $\langle X | {Y_i} \rangle $ satisfies
the B(6) condition if it satisfies the following conditions:

\begin{enumerate}

\item \label{item:b6.1}(Small Cancellation) $\langle X | {Y_i} \rangle$ satisfies the $C'(\frac{1}{\alpha})$ condition for $\alpha \geq 14$.
\item \label{item:b6.2} (Wallspace Cones) Each $Y_i$ is a wallspace where each wall in $Y_i$ is the union $\sqcup U_j$ of a collection of disjoint embedded 2-sided hyperplanes in $Y_i$, and there is an embedding $\sqcup N(U_j) \rightarrow Y_i$ of the disjoint union of their carriers into $Y_i$. Each such collection separates $Y_i$. Each hyperplane in $Y_i$ lies in a unique wall.
\item \label{item:b6.3} (Hyperplane Convexity) If $P \rightarrow Y_i$ is a path that starts and ends on vertices
on 1-cells dual to a hyperplane $U$ of $Y_i$ and $P$ is the concatenation of at most 7 pieces, then $P$ is path homotopic
in $Y_i$ to a path $P\rightarrow N(U) \rightarrow Y_i$.
\item \label{item:b6.4} (Wall Convexity) Let $S$ be a path in $Y_i$ that starts and ends with 1-cells
dual to the same wall of $Y_i$. If $S$ is the concatenation of at most 7 pieces, then $S$ is path-homotopic into the carrier of a hyperplane of that wall.
\item \label{item:b6.5} (Equivariance) The wallspace structure on each cone $Y$ is preserved by $Aut_X(Y)$. 
\end{enumerate}
\end{definition}

\begin{historicalremark} In the setting of classical small cancellation theory, the $B(2n)$ condition was defined in~\cite{WiseSmallCanCube04}. Specifically, the ``classical" $B(2n)$ condition states that for
each $2$-cell $R$ in a 2-complex $X$, and for each path $S \rightarrow \partial R$ which is the concatenation of at
most $n$ pieces in $X$, we have $|S| \leq \frac{1}{2} |\partial R|$. The classical $B(2n)$ condition is intermediate to the $C'(\frac{1}{2n})$ and $C(2n)$ conditions in the sense that $C'(\frac{1}{2n}) \Rightarrow B(2n) \Rightarrow C(2n)$. While not a perfect parallel with the notion considered here, the notation is meant to suggest the fact that, in the classical setting, the $B(6)$ condition is sufficient to guarantee the existence of a wallspace structure on $X$ that leads to cocompact cubulability.
\end{historicalremark}

The B(6) condition is extremely useful because it facilitates producing a wallspace structure on the coned-off space $X^*$ by starting \emph{only} with a wallspace structure (satisfying some extra conditions) on each of the cones. This is done by defining an equivalence relation $\sim$ on the hyperplanes of $\widetilde X^*$ as explained below.

\begin{definition}\label{def:eqrel} Let $U$ and $U'$ be hyperplanes in $\widetilde X^*$. Then $U \sim U'$  provided that for some translate of some cone $Y_i$ in $\widetilde X^*$, the intersections $U \cap Y_i$ and $U' \cap Y_i$ lie in the same wall of $Y_i$. A \emph{wall} of $\widetilde X^*$ is a collection of hyperplanes of $\widetilde X^*$ corresponding to an equivalence class.
\end{definition}

That this equivalence relation does in fact define a wallspace structure on $X^*$ when the B(6) condition is satisfied is the content of~\cite[5.f]{WiseIsraelHierarchy}.

\begin{definition}
A hyperplane $U$ is \emph{$m$-proximate} to a $0$-cube $v$ if there is a path $P =P_1, \ldots, P_m$
such that each $P_i$ is either a single edge or a piece, $v$ is the initial
vertex of $P_1$ and $U$ is dual to an edge in $P_m$. A wall is \emph{$m$-proximate} to $v$ if it
has a hyperplane that is $m$-proximate to $v$. A hyperplane is \emph{$m$-far} from a $0$-cube if it is not $m'$-proximate to it for any $m'\leq m$.
\end{definition}

\begin{definition} A hyperplane $U$ of a cone $Y$ is \emph{piecefully convex} if the following holds: For any path $\tau\rho \rightarrow Y$ with endpoints on $N(U)$, if $\tau$ is a geodesic and $\rho$ is trivial or lies in a piece of $Y$ containing an edge
dual to $U$, then $\tau\rho$ is path-homotopic in $Y$ to a path $\mu \rightarrow N(U)$.
\end{definition}

The following is remarked upon in~\cite[5.43]{WiseIsraelHierarchy}. We write $\widetilde N(U):=\widetilde{N(U)}$.

\begin{proposition}\label{prop:piececon} Let $K$ be the maximal diameter of any piece of $Y_i$ in $X^*$. Then a hyperplane $U$ of $Y_i$ is piecefully convex provided that its carrier $N(U)$ satisfies: $\dist_{\widetilde Y_i}(g\widetilde N(U), \widetilde N(U))> K$ for any translate $g\widetilde N(U)\neq \widetilde N(U) \subset \widetilde Y_i$.
\end{proposition}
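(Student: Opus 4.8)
The plan is to verify the defining property of pieceful convexity by lifting to the universal cover of the cone and exploiting the convexity of hyperplane carriers in CAT(0) cube complexes, with the separation hypothesis $\dist_{\widetilde Y}(g\widetilde N(u),\widetilde N(u))>K$ playing the role that small cancellation plays in the classical statement. Write $Y:=Y_i$. Since $u$ is an embedded two-sided hyperplane, $N(u)$ is a thickening of $u$, the inclusion $N(u)\hookrightarrow Y$ is locally convex and $\pi_1$-injective, and the preimage of $N(u)$ in the CAT(0) cube complex $\widetilde Y$ is the disjoint union of the $\pi_1Y$-translates of a single convex carrier $\widetilde N(u)$; distinct translates lie at $\widetilde Y$-distance $>K$ by hypothesis. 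Given $\tau\rho\to Y$ as in the definition, I would lift it to $\widetilde\tau\widetilde\rho\to\widetilde Y$, with endpoints $\widetilde p,\widetilde q$ on translates $g_1\widetilde N(u)$ and $g_2\widetilde N(u)$. The key reduction is that it suffices to prove $g_1\widetilde N(u)=g_2\widetilde N(u)$: then $\widetilde p,\widetilde q$ lie in a common convex subcomplex, so a combinatorial geodesic $\widetilde\mu$ between them stays in it; since $\widetilde Y$ is simply connected, $\widetilde\tau\widetilde\rho$ is path-homotopic rel endpoints to $\widetilde\mu$, and projecting the homotopy down gives the required $\mu\to N(u)$.

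Next I would pin down the two carriers. The endpoint $\widetilde q$ is the easy one: if $\rho$ is nontrivial it lifts into a lift $\widetilde P$ of an abstract piece of $Y$ --- a subcomplex of diameter $\le K$ containing an edge dual to some translate $h\widetilde u$ of a lift of $u$, with carrier $h\widetilde N(u)$ --- so from $\widetilde q\in\widetilde P$ we get $\dist_{\widetilde Y}(g_2\widetilde N(u),h\widetilde N(u))\le K$, hence $g_2\widetilde N(u)=h\widetilde N(u)$; moreover the concatenation point $\widetilde b$ of $\widetilde\tau$ and $\widetilde\rho$ also lies in $\widetilde P$, hence within $K$ of $g_2\widetilde N(u)$ (when $\rho$ is trivial, $\widetilde b=\widetilde q$ lies on $g_2\widetilde N(u)$ outright). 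It remains to show $g_1\widetilde N(u)=g_2\widetilde N(u)$, knowing that $\widetilde\tau$ is a combinatorial geodesic from $\widetilde p\in g_1\widetilde N(u)$ to the $0$-cube $\widetilde b$, which is $K$-close to the convex subcomplex $g_2\widetilde N(u)$. If $g_1\widetilde N(u)\neq g_2\widetilde N(u)$, the hyperplane $g_1\widetilde u$ separates $\widetilde Y$, and since every point of $g_1\widetilde N(u)$ or of the side of $g_1\widetilde u$ not containing $g_2\widetilde N(u)$ is at distance $>K$ from $g_2\widetilde N(u)$, the point $\widetilde b$ lies strictly on the side containing $g_2\widetilde N(u)$; one then analyses how the geodesic $\widetilde\tau$ meets $g_1\widetilde u$ and $g_2\widetilde N(u)$ --- using that a geodesic crosses each hyperplane at most once and interacts combinatorially with convex carriers --- to contradict the separation bound.

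The step I expect to be the main obstacle is exactly this last one: turning ``$\widetilde\tau$ is a geodesic out of $g_1\widetilde N(u)$ whose far end is $K$-close to $g_2\widetilde N(u)$'' into ``$g_1\widetilde N(u)=g_2\widetilde N(u)$''. This is the one place where the quantitative hypothesis $\dist>K$ is genuinely needed, and it requires combining the convexity of carriers in CAT(0) cube complexes with the combinatorics of geodesics and hyperplanes; the delicate configuration is when $\widetilde p$ lies on the face of $g_1\widetilde N(u)$ pointing away from $\widetilde b$, so that $\widetilde\tau$ must first traverse $g_1\widetilde N(u)$ before heading toward $g_2\widetilde N(u)$, and one must rule this out while keeping $g_1\widetilde N(u)$ and $g_2\widetilde N(u)$ more than $K$ apart.
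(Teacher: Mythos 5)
The paper does not actually prove this proposition: it is stated as a restatement of \cite[5.43]{WiseIsraelHierarchy} and the burden of proof is deferred to Wise's book. So there is no ``paper proof'' to match; what can be assessed is whether your outline closes.

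Your setup is sensible: lifting $\tau\rho$ to $\widetilde\tau\widetilde\rho$, reducing to showing that the two endpoints $\widetilde p$ and $\widetilde q$ lie on the \emph{same} translate of $\widetilde N(u)$, and pinning down the translate at the $\widetilde q$-end by comparing it with the lift of the piece containing $\widetilde\rho$ (the estimate $\dist(g_2\widetilde N(u),h\widetilde N(u))\le K$ forcing equality is correct). The problem is the last step, and you flag it yourself: deducing $g_1\widetilde N(u)=g_2\widetilde N(u)$ from the data ``$\widetilde\tau$ is a geodesic from $\widetilde p\in g_1\widetilde N(u)$ to a point $\widetilde b$ at distance $\le K$ from $g_2\widetilde N(u)$, and distinct translates are $>K$ apart.'' Those hypotheses simply do not yield a contradiction when $g_1\widetilde N(u)\neq g_2\widetilde N(u)$: nothing bounds the length of $\widetilde\tau$, so it is perfectly consistent for a long geodesic to start on one carrier and end within $K$ of another, with the two carriers far apart. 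The observation that $\widetilde b$ lies on the $g_2$-side of $g_1\widetilde u$ and that geodesics cross each hyperplane at most once does not create the contradiction you are hoping for --- a geodesic may start on the carrier of $g_1\widetilde u$, cross it (or not) once, and travel an arbitrary distance toward $g_2\widetilde N(u)$. As written, the separation hypothesis $>K$ is being asked to control a configuration it has no leverage over.

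This suggests you are proving a stronger statement than the one intended. The missing ingredient is a reason that, in the situation where pieceful convexity is invoked, the endpoint lifts are already constrained to a single orbit representative (or $|\tau|$ is a priori bounded by piece data), and the $>K$ hypothesis is then what turns ``close to a translate'' into ``equal to that translate.'' In other words, the quantitative hypothesis should be used to promote a coarse identification to an exact one, not to rule out an arbitrarily long geodesic. Until you identify that constraint, the reduction ``it suffices to show $g_1\widetilde N(u)=g_2\widetilde N(u)$'' is correct but unreachable, and the proof does not close. To repair the argument you should go back to \cite[5.42--5.43]{WiseIsraelHierarchy} and check precisely how ``geodesic'' and ``endpoints on $N(u)$'' are quantified there, and then rerun the last step with the correct bound in hand.
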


\begin{definition}[Cut by a wall]
Let $g \in G$ be an element acting on $\widetilde X$. An \emph{axis} $\reals_g$ for $g$ is a $g$-invariant copy of $\reals$ in $\widetilde X$. An element $g$ is \emph{cut by a wall} $W$  if $g^nW \cap \reals_g=\{n\}$ for all $n \in \integers$.
\end{definition}

The theorem below is a restatement of~\cite[5.44]{WiseIsraelHierarchy}, together with~\cite[5.45]{WiseIsraelHierarchy}, and the fact that the short innerpaths condition is satisfied when $C'(\frac{1}{\alpha})$ holds for $\alpha \geq 14$.

\begin{theorem}\label{thm:proper}

 Suppose $X^*= \langle X | \{Y_i\} \rangle $ satisfies the following hypotheses:
 \begin{enumerate}
 \item \label{item:prop.1} $X^*$ satisfies the B(6) condition.
\item \label{item:prop.2} Each hyperplane $U$ of each cone $Y_i$ is piecefully convex.
\item \label{item:prop.3} Let $k \rightarrow Y \in \{Y_i\}$ be a geodesic with endpoints $p, q$. Let $U_1$ and $U'_1$ be distinct
hyperplanes in the same wall $w_1$ of $Y$. Suppose $k$ traverses a 1-cell dual to $U_1$, and either $U'_1$ is 1-proximate to $q$ or $k$ traverses a 1-cell dual to $U'_1$. Then there is a wall $w_2$ in $Y$ that separates $p, q$ but is not
2-proximate to $p$ or $q$.
\item \label{item:prop.4} Each infinite order element of $Aut(Y_i)$ is cut by a wall.
 \end{enumerate}

Then the action of $\pi_1X^*$ on $C$ has torsion stabilizers.
\end{theorem}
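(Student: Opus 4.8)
The plan is to assemble the statement from the two results of Wise cited just above, so the real content is verifying that our hypotheses \eqref{item:prop.1}--\eqref{item:prop.4} are exactly what those results require. First I would record that, by \eqref{item:prop.1} and~\cite[5.f]{WiseIsraelHierarchy}, the equivalence relation of Definition~\ref{def:eqrel} genuinely endows $\widetilde X^*$ with a wallspace structure, so that Sageev's dual cube complex $C$ and the action of $\pi_1 X^*$ on it are defined; what remains is to control the cube stabilizers. I would reduce this to the assertion that \emph{every infinite-order element $g \in \pi_1 X^*$ is cut by some wall $W$ of $\widetilde X^*$}, meaning $g^n W \cap \reals_g = \{n\}$ for a suitable axis $\reals_g$: such a $W$ determines a hyperplane of $C$ skewered by $g$, which forces $g$ to act on $C$ with an axis and hence to fix no cube, so that every cube stabilizer is a torsion group.

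The reduction and the cut-by-wall statement together are the content of~\cite[5.44 and 5.45]{WiseIsraelHierarchy}, whose standing assumptions are precisely our \eqref{item:prop.2}, \eqref{item:prop.3}, and \eqref{item:prop.4} together with the \emph{short innerpaths} condition for $X^*$. The cut-by-wall argument splits according to whether $g$ is conjugate into some $\pi_1 Y_i$: if it is not, one finds a hyperplane of $X$ cutting $g$ and argues, using pieceful convexity \eqref{item:prop.2} and the small-cancellation hypothesis, that the associated wall of $\widetilde X^*$ is not dragged off $\reals_g$ as it crosses cones, with \eqref{item:prop.3} ruling out the degenerate case in which a wall enters and leaves a cone along the axis; if $g$ is conjugate into some $\pi_1 Y_i$, one invokes \eqref{item:prop.4} to obtain a wall of $Y_i$ cutting the relevant automorphism and checks, via Definition~\ref{def:eqrel}, that it persists to a wall of $\widetilde X^*$ cutting $g$. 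The short innerpaths condition, which powers the disk-diagram estimates in the first case, comes for free from \eqref{item:prop.1}: since $X^*$ is $C'(\frac{1}{\alpha})$ with $\alpha \geq 14$, it admits a non-positively curved angling rule satisfying short innerpaths by~\cite[3.32 and 3.70]{WiseIsraelHierarchy}, exactly as in the discussion preceding Theorem~\ref{thm:hyp}.

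\textbf{The main obstacle} is the bookkeeping at the interface between the abstract cone-automorphism groups $Aut(Y_i)$ and the conjugates of the $\pi_1 Y_i$ inside $\pi_1 X^*$: one must confirm that an infinite-order automorphism cut by a wall of $Y_i$ really yields, through the equivalence relation of Definition~\ref{def:eqrel}, a wall of $\widetilde X^*$ that cuts the corresponding element of $\pi_1 X^*$, and that~\cite[5.44 and 5.45]{WiseIsraelHierarchy} require nothing beyond \eqref{item:prop.2}--\eqref{item:prop.4} once short innerpaths is in hand. Granting this, the theorem follows.
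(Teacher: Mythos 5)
Your proposal is correct and takes essentially the same route as the paper: the paper itself offers no independent argument but declares the theorem to be a restatement of~\cite[5.44 and 5.45]{WiseIsraelHierarchy} combined with the observation (via~\cite[3.32 and 3.70]{WiseIsraelHierarchy}) that $C'(\frac{1}{\alpha})$ with $\alpha \geq 14$ yields a non-positively curved angling rule satisfying short innerpaths. Your additional sketch of the cut-by-wall dichotomy and the reduction via skewered hyperplanes is a faithful unpacking of what those citations contain, not a different strategy.
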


\section{Cubical noise}\label{sec:noise}

In the classical setting, there are two essentially distinct strategies for producing group presentations satisfying good small cancellation conditions: taking large enough powers of the relators, and adding ``noise" to the presentation by multiplying each relator by a sufficiently long, suitably chosen word. In the cubical setting, taking powers of relators translates to taking finite-degree covers of the cycles that represent the relators, and this method generalises to taking finite-degree covers of cube complexes with more complicated fundamental groups.  This is the line of inquiry that has been most explored, and for which there exist useful theorems producing cubical small cancellation. This will, however, not be suitable for our applications, because once a cubical presentation $\langle X | \{\hat{Y}_i \rightarrow X\} \rangle$ has been obtained by taking covers, any modifications of the cones (other than taking further covers) will dramatically affect the size of pieces, and possibly invalidate whatever  small cancellation conclusions had been attained. Thus, we instead prove a cubical small cancellation theorem that builds on the idea of adding noise to a presentation. The procedure we describe will be more stable, in the sense that slightly perturbing the choice of cones will not affect the small cancellation conclusions. 

\begin{remark}\label{rmk:special}
We state and prove Theorem~\ref{thm:cyclic} in more generality than that which is needed for later applications. In practice the reader may take $Y$ to be a bouquet of finitely many circles, as this is all that is required for the proof of Theorem~\ref{thm:main}. 
It is also worth noting that while the statement of Theorem~\ref{thm:cyclic}~\eqref{item: special} requires that $X$ and $Y$ be special, the proof only uses that hyperplanes are embedded and 2-sided.
\end{remark}

\begin{theorem}\label{thm:cyclic}
Let $X$ and $Y$ be compact non-positively curved cube complexes with hyperbolic fundamental groups and let  $\mathcal{H}$ be the set of hyperplanes of $X$. Let $\{H_1, \ldots , H_r\}$ be a malnormal collection of free, non-abelian quasiconvex
subgroups of $\pi_1X$, and suppose that $H_i \cap Stab(\widetilde{U})$ is trivial or equal to $H_i$ for all $U  \in \mathcal{H}$.
Let $\gamma_1 \rightarrow X\vee Y, \ldots,\gamma_r \rightarrow X\vee Y$ be closed essential paths based at the wedge point and let $y_1, \ldots, y_r$ be the words in $\pi_1X\ast \pi_1 Y$ represented by the $\gamma_i$'s.
Then for each $\alpha \geq 1$ there are cyclic subgroups $\langle w_i \rangle \subset H_i\ast \langle y_i \rangle$ such that $w_i=w'_iy_i$ where $w'_i \in H_i$ for each $i \in \{1, \ldots r\}$ and:

\begin{enumerate}
 \item The group $\pi_1X\ast \pi_1 Y \big/\langle\langle  w_1, \ldots,  w_r \rangle\rangle$ has a cubical presentation satisfying the $C'(\frac{1}{\alpha})$ condition.
 \item If $\alpha \geq 14$, then the group $\pi_1X \ast \pi_1Y \big/\langle\langle w_1, \ldots,  w_r \rangle\rangle$ is hyperbolic. 
 \item \label{item: special} If $X$ and $Y$ are special, there is an $\alpha_0\geq 14$ such that if $\alpha \geq \alpha_0$, then the group $\pi_1X \ast \pi_1 Y \big/\langle\langle  w_1, \ldots,  w_r \rangle\rangle$ acts properly and
cocompactly on a CAT(0) cube complex.
\end{enumerate}

\end{theorem}

\begin{remark}
The reader might wish to compare Theorem~\ref{thm:cyclic} with~\cite[5.48]{WiseIsraelHierarchy}, which is the analogous result for finite-degree coverings, and whose proof informs the proof below. 
\end{remark}

\begin{proof}
\textbf{Obtaining small cancellation:}
Since $H_1, \ldots , H_r$ are quasiconvex subgroups of $\pi_1 X \ast \pi_1 Y$ and $\pi_1X \ast \pi_1 Y$ is hyperbolic, 
then Corollary~\ref{cor:loci} implies that there are based local isometries $C_1 \rightarrow X\vee Y, \ldots, C_r \rightarrow X\vee Y$ of superconvex subcomplexes with $\pi_1 C_i \cong H_i $ for each $i \in \{1, \ldots, r\}$. 
So there is a cubical presentation $(X\vee Y)^* =\langle X \vee Y| \{C_i \rightarrow X\vee Y\}^r_{i=1} \rangle$ with fundamental group $\pi_1X\ast \pi_1 Y \big/\langle\langle \pi_1C_1 ,\ldots, \pi_1C_r \rangle\rangle$.  
By Lemma~\ref{lem:maln} and Lemma~\ref{lem:strips}, malnormality of $\{H_1, \ldots , H_r\}$ and superconvexity of the $\{C_1, \ldots, C_r\}$  ensures that there is a uniform upperbound $K$ on the diameter of pieces. 

By hyperbolicity, any cyclic subgroup of $\pi_1 X \ast \pi_1 Y$ is quasiconvex. So for any choice of cyclic subgroups $\langle w_i \rangle < \pi_1 X \ast \pi_1 Y$ with $i \in \{1, \ldots, r\}$ there are local isometries $W_i \rightarrow X \vee Y$ with $\pi_1 W_i \cong \langle w_i\rangle$. 

Let $\alpha \geq 1$, and choose each $\sigma_i \rightarrow X\vee Y$ so that $\sigma_i=\sigma'_i\gamma_i$  and  satisfying:
\begin{enumerate}
\item  $\sigma'_i$ is a based closed path in $C_i \subset X$, 
\item $\sigma'_i$ is not a proper power, and does not contain subpaths of length $\geq K\alpha$ that are proper powers,
\item $\sigma'_i\gamma_i$ does not have any backtracks,
\item  the  $W_i$ corresponding to $\langle w_i \rangle =\langle \sigma_i \rangle$ has diameter $||W_i||\geq K\alpha^2$.
\end{enumerate} 

For instance, one can choose $\sigma'_i$ to be of the form $\sigma'_i=\lambda_1 \lambda_2\lambda_1 \lambda^2_2\ldots \lambda_1\lambda^{K\alpha }_2 $ where $\lambda_1, \lambda_2$ are paths representing distinct generators of the corresponding $H_i< \pi_1 X$.  Without loss of generality, we can assume that the $\lambda_1, \lambda_2$ and the  $\gamma_1, \ldots,\gamma_r$ have minimal length in their homotopy classes, and therefore that none of the $\lambda_i$ or $\gamma_i$ have any backtracks, so any backtracks in $\sigma'_i\gamma_i$ arise from cancellation between $\sigma'_i$ and $\gamma_i$. If any cancellation happens, we can rechoose $\lambda_1$ and $\lambda_2$ to eliminate it (for instance, by shortening $\lambda_1$ and $\lambda_2$).

Pieces in each $C_i$ have size bounded by $K$, and each $W_i \rightarrow X \vee Y$ factors through the corresponding $C_i$, so the size of pieces between different cone-cells or between cone-cells and rectangles is bounded by $K$, 
the size of pieces between a cone-cell and itself is bounded by $K\alpha-1$ and $||W_i|| \geq (K\alpha)!+K\alpha \geq K\alpha^2$, so $\langle X\vee Y | \{W_i \rightarrow X\vee Y\}^r_{i=1} \rangle$ satisfies the $C'(\frac{1}{\alpha})$ condition.
 
\textbf{Obtaining hyperbolicity:} As explained above, the $\langle w_i \rangle$ can be chosen so that $(X\vee Y)^*=\langle X \vee Y | \{W_i \rightarrow X\vee Y\}^r_{i=1} \rangle$ satisfies the $C'(\frac{1}{\alpha})$ condition for $\alpha \geq 14$. Since $\pi_1X \ast \pi_1 Y$ is hyperbolic and $(X\vee Y)^*$ is compact, Theorem~\ref{thm:hyp} then implies that $(X\vee Y)^*$ is hyperbolic.

\textbf{Obtaining cocompact cubulability:} Define a wallspace structure on $(X\vee Y)^*$ as follows. Firstly, by subdividing $X\vee Y$, we may assume that each $W_i$ has an even number of hyperplanes cutting the generator of $\langle w_i \rangle$. The specialness hypothesis ensures that all hyperplanes of $X\vee Y$ and of each $W_i$ are embedded and $2$-sided, moreover, since each $W_i$ has cyclic fundamental group and $H_i \cap Stab(\widetilde{U})$ is trivial or equal to $Stab(\widetilde{U})$ for each $\widetilde{U}$, all the hyperplanes of each $W_i$ are contractible or have the homotopy type of a circle representing the generator of $\pi_1 W_i$. Hence, we can define a wallspace structure on each of the cones by defining a wall to be either a single hyperplane if the hyperplane does not cut the generator of the corresponding $\langle w_i \rangle$, or by defining a wall to be an equivalence class consisting of two antipodal hyperplanes cutting the generator.  Concretely, if the generator of $\langle w_i \rangle$ is a cycle $\sigma \rightarrow X\vee Y$ of length $2n$, then letting $\sigma=e_1 \ldots e_{2n}$, hyperplanes $U$ and $U'$ are in the same equivalence class if and only if $U$ is dual to $e_j$ and $U'$ is dual to $e_{j+n}$ (mod $n$) for some $j \in \{1, \ldots, 2n\}$. These choices are exemplified in Figure~\ref{fig:conecell}.

We now check condition~\eqref{item:prop.1} of Theorem~\ref{thm:proper}, which will allow us to extend the wallspace structure on the cones to a wallspace structure for  $(X\vee Y)^*$ using the equivalence relation in Definition~\ref{def:eqrel}.

Choose the  $\langle w_i \rangle$ so that $(X\vee Y)^*=\langle X\vee Y | \{W_i \rightarrow X\vee Y\}^r_{i=1} \rangle$ satisfies the $C'(\frac{1}{\alpha})$ condition for $\alpha \geq 14$. With the choice of walls described above, each cone is a wallspace satisfying condition~\eqref{item:b6.2} of Definition~\ref{def:b6}. 
The $C'(\frac{1}{14})$ condition is also sufficient to ensure that condition~\eqref{item:b6.3} is met. Indeed, the only way for a path with endpoints on the carrier of a hyperplane $U$ to not be homotopic into the carrier of the hyperplane is if the path is homotopic into a power of the generator of $W_i$, and such a path would have to traverse at least $14$ pieces.
Moreover, condition~\eqref{item:b6.4} is met by rechoosing the cyclic subgroups so that the cubical presentation satisfies $C'(\frac{1}{16})$. To wit, since pairs $U,U'$ of hyperplanes lying on the same wall $W$ are antipodal and $(X\vee Y)^*$ satisfies $C'(\frac{1}{\alpha})$, the number of pieces in a path $\sigma \rightarrow C_i$ with endpoints on distinct hyperplanes of $W$ is at least $\frac{\alpha}{2}$, so choosing $\alpha \geq 16$ ensures that such a path traverses at least 8 pieces. 
The choice of wallspace on each cone also ensures that condition~\eqref{item:b6.5} is met: any automorphism of $X\vee Y$ sends a wall not cutting a generator to a wall not cutting a generator, and a wall cutting a generator to a wall cutting a generator. 

Thus, condition~\eqref{item:prop.1} of Theorem~\ref{thm:proper} is satisfied.  Since each wall arises from a quasiconvex subgroup, Proposition~\ref{prop:coco} ensures cocompactness of the action on the dual cube complex.
To ensure properness of the action, we check the rest of the conditions of Theorem~\ref{thm:proper}.

Similar modifications to $X^*$ will ensure that conditions~\eqref{item:prop.2} and~\eqref{item:prop.3} of Theorem~\ref{thm:proper} are met. For condition~\eqref{item:prop.2}, by  Proposition~\ref{prop:piececon}, it suffices to ensure that $\dist_{\widetilde W_i}(g\widetilde N(U), \widetilde N(U))> K$ for any translate $g\widetilde N(U)\neq \widetilde N(U) \subset \widetilde W_i$. Since each piece of $W_i$ contains at least 1 edge, this can be guaranteed by rechoosing the $w_i$'s so that $X^*$ satisfies the $C'(\frac{1}{K'})$ condition, where $K'=\max\{K, 16\}$.
Condition~\eqref{item:prop.3} also follows, because any two hyperplanes in the same wall are at least $8$-far, so there is a hyperplane $V$ crossing the generator of $\langle w_i\rangle$ that is $2$-far from both $U$ and from $U'$, and one can ensure that the antipodal hyperplane $V'$ is also $2$-far from both $U$ and from $U'$ by rechoosing $X^*$ so that it satisfies the $C'(\frac{1}{K''})$ condition, where $K''=\max\{2K, 16\}$.

 Finally, the choice of walls implies that condition~\eqref{item:prop.4} also holds: since $\pi_1(W_i)$ is cyclic for each $i \in I$, every element $g \in Aut(W_i)$ has an axis, which is cut by a wall of $X^*$ crossing the generator of $\pi_1(W_i)$. 
\end{proof}

\begin{definition}[Height]
The \emph{height} of $H \leq G$ is the maximal $n \in \naturals$ such that there exist distinct cosets $g_1H, \ldots, g_nH$ for which $H^{g_1}\cap \ldots \cap H^{g_n}$ is infinite.
\end{definition}

In~\cite{GMRS98} it was proven that:

\begin{theorem}\label{thm:finiteheight}
Quasiconvex subgroups of hyperbolic groups have finite height.
\end{theorem}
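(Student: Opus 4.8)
The plan is to bound the height of a $\kappa$-quasiconvex subgroup $H$ of a $\delta$-hyperbolic group $G$ by a constant depending only on $\delta$ and $\kappa$. Fix a finite generating set, let $\delta$ be a hyperbolicity constant of the Cayley graph, and write $\Lambda(\cdot)\subseteq\partial G$ for limit sets in the Gromov boundary. The idea is that if $H^{g_1}\cap\dots\cap H^{g_n}$ is infinite, then all of the cosets $g_1H,\dots,g_nH$ uniformly fellow-travel a single bi-infinite geodesic, after which a coset-counting argument finishes the job.

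First I would assemble two standard facts. \emph{(a)} For any $g\in G$ and any subgroup $K\le G$ one has $\Lambda(gKg^{-1})=g\,\Lambda(K)$, where $g$ acts on $\partial G$ by the homeomorphism it induces. Furthermore, if $\eta\ne\eta'$ lie in $\Lambda(H)$ then every bi-infinite geodesic with endpoints $\eta,\eta'$ lies in $N_{\kappa''}(H)$ for some $\kappa''=\kappa''(\delta,\kappa)$: one approximates $\eta,\eta'$ by sequences in $H$, observes that the segments joining the approximants lie in $N_\kappa(H)$ by quasiconvexity, passes to a subsequential limit to produce a bi-infinite geodesic from $\eta$ to $\eta'$ inside $N_\kappa(H)$, and invokes that two bi-infinite geodesics sharing a pair of ideal endpoints are uniformly Hausdorff-close in a $\delta$-hyperbolic space. \emph{(b)} An infinite subgroup of a hyperbolic group contains a loxodromic element --- it is not an infinite torsion group, and infinite-order elements of hyperbolic groups are loxodromic --- so its limit set contains the two distinct fixed points of such an element, and in particular has at least two points.

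Now suppose $g_1H,\dots,g_nH$ are distinct cosets with $K:=H^{g_1}\cap\dots\cap H^{g_n}$ infinite. By \emph{(b)} choose distinct $\xi^{-},\xi^{+}\in\Lambda(K)$ and a bi-infinite geodesic $\gamma$ with endpoints $\xi^{-},\xi^{+}$. Since $K\le H^{g_i}$ for each $i$, we have $\xi^{\pm}\in\Lambda(K)\subseteq\Lambda(H^{g_i})=g_i\Lambda(H)$, hence $g_i^{-1}\xi^{\pm}\in\Lambda(H)$; so by \emph{(a)} the bi-infinite geodesic $g_i^{-1}\gamma$ lies in $N_{\kappa''}(H)$, that is, $\gamma\subseteq N_{\kappa''}(g_iH)$ for every $i$. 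Fixing a vertex $p$ of $\gamma$, each $g_iH$ then meets the ball $B(p,\kappa'')$, so the $n$ distinct cosets admit $n$ distinct representatives inside $B(p,\kappa'')$, whence $n\le|B(e,\kappa'')|$. As this bound depends only on $\delta$ and $\kappa$, the subgroup $H$ has finite height.

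The step I expect to be the main obstacle is the \emph{uniformity} of $\kappa''$: the argument becomes circular if $\kappa''$ is allowed to depend on the configuration $g_1,\dots,g_n$ --- for instance on the word length of a chosen element of $K$, whose quasi-axis has quality governed by that length. Running everything through $\Lambda(H)$, and through the neighbourhood of $H$ that contains geodesics with endpoints in $\Lambda(H)$ (a quantity controlled by $\kappa$ and $\delta$ alone), is precisely what avoids this. Finally I would remark that the cited result of Gitik--Mitra--Rips--Sageev is sharper: it bounds the \emph{width} of $H$, namely the largest $n$ for which there are distinct cosets with $H^{g_i}\cap H^{g_j}$ infinite for \emph{every} pair $i\ne j$; deducing finite width requires an additional Ramsey-type argument, but only finite height is used in this paper.
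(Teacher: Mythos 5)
The paper gives no proof of this statement; it simply cites it from Gitik--Mitra--Rips--Sageev \cite{GMRS98}, so there is no in-text argument to compare against. Your boundary-theoretic argument is correct and is a standard modern route: quasiconvexity gives that any bi-infinite geodesic with both ideal endpoints in $\Lambda(H)$ lies in $N_{\kappa''}(H)$ with $\kappa''$ depending only on $\delta$ and $\kappa$, the infinite intersection contributes two boundary points to every $\Lambda(H^{g_i})$ via a loxodromic element (here you use that hyperbolic groups have no infinite torsion subgroups), and a single geodesic then lies uniformly close to every coset $g_iH$, yielding a pigeonhole bound $n\le |B(e,\kappa'')|$. This is a different mechanism from the original argument in \cite{GMRS98}, which is combinatorial (built around counting ``essential'' and ``short-essential'' elements in a Cayley graph and a delicate case analysis) and actually establishes the stronger finiteness of \emph{width}, as you correctly note. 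Your approach buys a cleaner and more conceptual proof of the weaker statement the paper actually uses, with an explicit bound in terms of $\delta$, $\kappa$; the combinatorial route in \cite{GMRS98} buys finite width. Two small points worth flagging for completeness: the case $H$ finite is trivial (no such $n$ exists, or height is $0$) and should be dispatched before taking $\Lambda(H)$; and the convention $H^g=gHg^{-1}$ you implicitly adopt is the one consistent with the paper's use of \emph{left} cosets $g_iH$ in the definition of height, so the final coset-counting step is sound.
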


\begin{definition}
The \emph{commensurator} $C_G(H)$ of a subgroup $H$ of $G$ is the set
 $C_G(H)=\{g \in G: [H :H^g \cap H] < \infty\}$.
\end{definition}

\begin{figure}
\centerline{\includegraphics[scale=0.28]{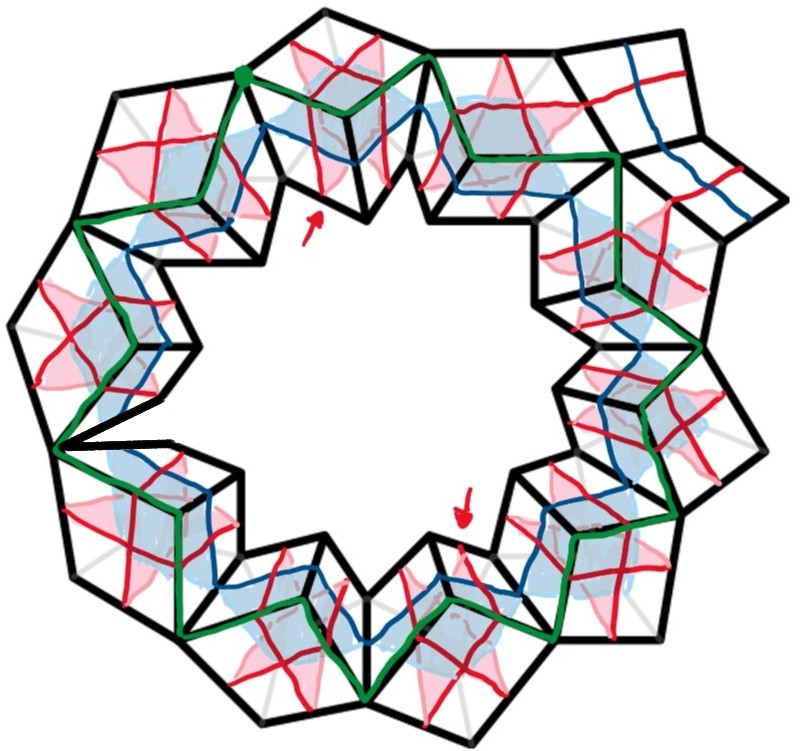}}
\caption{A potential cone-cell and its hyperplanes. The generator of its fundamental group is drawn in green, the hyperplanes that cross it are drawn in red, and the hyperplanes that do not cross it are drawn in blue. A pair of antipodal hyperplanes is indicated.}
\label{fig:conecell}
\end{figure} 

\begin{remark}
If $G$ is hyperbolic and the subgroup $H$ is infinite and quasiconvex, then $[C_G(H) :H]< \infty$ by~\cite{KapovichShort96}. In particular, $C_G(H)$ is also a quasiconvex subgroup of $G$, and if $G$ is torsion-free and $H$ is free and non-abelian then so is $C_G(H)$.
\end{remark}

The following result will be used in the proof of Lemma~\ref{lem:cylics}:

\begin{lemma}\label{lem:comms}\cite[8.6]{WiseIsraelHierarchy}
Let $G$ be hyperbolic and torsion-free and $H_1, \ldots, H_r$ be a collection of quasiconvex subgroups of $G$. Let $K_1, \ldots , K_s$ be representatives of the finitely many distinct
conjugacy classes of subgroups consisting of intersections of collections of
distinct conjugates of $H_1, \ldots, H_r$ in $G$ that are maximal with respect to having
infinite intersection. Then $\{C_G(K_1), \ldots ,C_G(K_s)\}$ is a malnormal
collection of subgroups of $G$.
\end{lemma}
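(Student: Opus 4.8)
The plan is to reduce the asserted malnormality of $\{C_G(K_1),\ldots,C_G(K_s)\}$ to a combinatorial statement about the collections of conjugates that define the $K_i$. First observe that each $K_i$ is, by construction, an infinite subgroup, and it is quasiconvex since it is an intersection of finitely many quasiconvex subgroups of the hyperbolic group $G$ (a standard fact). Hence, by the remark following Theorem~\ref{thm:finiteheight} (i.e.\ by~\cite{KapovichShort96}), $C_G(K_i)$ is quasiconvex and $[C_G(K_i):K_i]<\infty$; in particular each $C_G(K_i)$ is infinite and contains $K_i$ as a finite-index subgroup. The finiteness of the number of conjugacy classes of the $K_i$ is itself a consequence of Theorem~\ref{thm:finiteheight}: since $\{H_1,\ldots,H_r\}$ has finite height, every collection of pairwise distinct conjugates of the $H_\ell$ with infinite intersection has cardinality bounded by that height, and a standard argument then yields finitely many conjugacy classes of such intersections.

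Next, fix $g\in G$ and suppose $C_G(K_i)^g\cap C_G(K_j)\neq\{1\}$; the goal is to deduce $i=j$ and $g\in C_G(K_i)$. Since $G$ is torsion-free, every nontrivial subgroup is infinite, so $C_G(K_i)^g\cap C_G(K_j)=C_G(K_i^g)\cap C_G(K_j)$ is infinite (using $C_G(K_i)^g=C_G(K_i^g)$). The first step is to transfer infiniteness down to the $K$'s themselves: since $K_i^g$ has finite index in $C_G(K_i^g)$, the subgroup $K_i^g\cap C_G(K_j)$ has finite index in $C_G(K_i^g)\cap C_G(K_j)$ and is therefore infinite; and since $K_j$ has finite index in $C_G(K_j)$, the subgroup $K_i^g\cap K_j$ has finite index in $K_i^g\cap C_G(K_j)$ and is therefore infinite as well.

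The second step is the combinatorial heart of the argument. Write $K_i=\bigcap\mathcal{C}_i$ and $K_j=\bigcap\mathcal{C}_j$, where $\mathcal{C}_i,\mathcal{C}_j$ are collections of pairwise distinct conjugates of the $H_\ell$ that are maximal with respect to having infinite intersection. Conjugating by $g$, the collection $\mathcal{C}_i^g=\{D^g : D\in\mathcal{C}_i\}$ again consists of pairwise distinct conjugates, is maximal with respect to having infinite intersection, and satisfies $\bigcap\mathcal{C}_i^g=K_i^g$. Then $\mathcal{C}_i^g\cup\mathcal{C}_j$ is a collection of pairwise distinct conjugates whose intersection is $K_i^g\cap K_j$, which we have just shown is infinite. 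Since $\mathcal{C}_i^g\subseteq\mathcal{C}_i^g\cup\mathcal{C}_j$ and $\mathcal{C}_i^g$ is maximal among collections of distinct conjugates with infinite intersection, we conclude $\mathcal{C}_i^g\cup\mathcal{C}_j=\mathcal{C}_i^g$, that is, $\mathcal{C}_j\subseteq\mathcal{C}_i^g$; by symmetry $\mathcal{C}_i^g\subseteq\mathcal{C}_j$, so $\mathcal{C}_i^g=\mathcal{C}_j$ and hence $K_i^g=K_j$. In particular $K_i$ is conjugate to $K_j$, and since $K_1,\ldots,K_s$ represent distinct conjugacy classes we get $i=j$. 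Finally $K_i^g=K_i$, so $g\in N_G(K_i)$, and $N_G(K_i)\subseteq C_G(K_i)$ because for any $h\in N_G(K_i)$ we have $K_i^h\cap K_i=K_i$, hence $[K_i:K_i^h\cap K_i]=1<\infty$. This establishes the malnormality of $\{C_G(K_1),\ldots,C_G(K_s)\}$.

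The step I expect to be the main obstacle is the combinatorial one: making precise the notion ``maximal with respect to having infinite intersection'' and using it to upgrade the a priori weak information ``$K_i^g\cap K_j$ is infinite'' to the equality $K_i^g=K_j$. The surrounding commensurator bookkeeping is routine, but one must be careful that it is torsion-freeness of $G$ that licenses starting from ``$\neq\{1\}$'' rather than ``infinite'', and that each intermediate intersection is formed inside an appropriate finite-index overgroup so that no infiniteness is lost along the way; one should also record (or cite) the standard facts that intersections of finitely many quasiconvex subgroups are quasiconvex and that $C_G(K_i^g)=C_G(K_i)^g$.
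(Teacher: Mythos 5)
The paper does not prove Lemma~\ref{lem:comms}; it is quoted directly from Wise~\cite[8.6]{WiseIsraelHierarchy}, so there is no in-paper proof to compare against. Your argument is a correct, self-contained proof of the cited statement and follows the route one would expect for this result: use the finite-index relation $[C_G(K):K]<\infty$ (Kapovich--Short) to pass from nontriviality of $C_G(K_i)^g\cap C_G(K_j)$ (infinite, by torsion-freeness) down to infiniteness of $K_i^g\cap K_j$ via the elementary fact $[B\cap C:A\cap C]\le [B:A]$, then invoke maximality of the defining collections $\mathcal{C}_i$ to force $\mathcal{C}_i^g=\mathcal{C}_j$ and hence $K_i^g=K_j$, and finish with $N_G(K_i)\le C_G(K_i)$. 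Two small points worth tightening if this were written up: (i) when you form $\mathcal{C}_i^g\cup\mathcal{C}_j$, the union is taken as a set of subgroups, so any coincidences between a conjugate in $\mathcal{C}_i^g$ and one in $\mathcal{C}_j$ are harmless, but it is worth saying so since ``distinct conjugates'' is part of the hypothesis on the collections; and (ii) the aside about finiteness of the number of conjugacy classes is really a finiteness-of-width statement for the family $\{H_1,\dots,H_r\}$ (a mild extension of the single-subgroup height result of~\cite{GMRS98}), not an immediate consequence of the height of each $H_\ell$ separately, though this plays no role in the malnormality argument itself since the lemma asserts that finiteness as part of its setup.
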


The ensuing lemma surely exists in some form in the literature, but we include a proof for completeness:

\begin{lemma}\label{lem:cylics}
Let $G$ be a non-elementary, torsion-free hyperbolic group. For every $k \in \naturals$, $G$ contains a malnormal collection $\{H_1, \ldots, H_k\}$ of infinite-index quasiconvex non-abelian free subgroups.
\end{lemma}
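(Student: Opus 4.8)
The plan is to build the collection one subgroup at a time, using Corollary~\ref{cor:quasi} to produce quasiconvex free non-abelian subgroups and then Lemma~\ref{lem:comms} to repair malnormality. First, since $G$ is non-elementary and torsion-free hyperbolic, Corollary~\ref{cor:quasi} supplies a quasiconvex free subgroup $F_n < G$ of any desired finite rank; taking rank $\geq 2$ gives free non-abelian quasiconvex subgroups. The naive attempt — pick $k$ such subgroups — will fail malnormality in general, so instead I would produce a single quasiconvex free non-abelian subgroup $F$ of \emph{infinite index} (which is automatic as soon as $\mathrm{rank}(F) < \infty$ and $G$ is not virtually free, and can be arranged in general by Theorem~\ref{thm:quasi}, since a proper quasiconvex subgroup has infinite index in a one-ended group, or by passing to a suitable free factor) and then take $k$ pairwise ``independent'' conjugates or, better, $k$ subgroups sitting inside $F$ along a free decomposition.

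Concretely, here is the cleanest route. Fix a quasiconvex free non-abelian subgroup $F < G$ of infinite index, with a free basis $\{a_1, b_1, a_2, b_2, \ldots, a_k, b_k\}$ of rank $2k$ (obtained by iterating Theorem~\ref{thm:quasi} $2k$ times starting from a cyclic subgroup, each application preserving quasiconvexity). Set $H_i = \langle a_i, b_i \rangle$; each $H_i$ is free non-abelian, quasiconvex in $F$ hence quasiconvex in $G$, and of infinite index. Now apply Lemma~\ref{lem:comms} to the collection $\{H_1,\ldots,H_k\}$: it yields commensurators $C_G(K_1),\ldots,C_G(K_s)$ forming a malnormal collection, where the $K_j$ are the maximal infinite intersections of conjugates of the $H_i$'s. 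Because the $H_i$ are free factors of a common free group $F$ that is malnormal-ish inside $G$ (this is where care is needed), the only maximal infinite intersections should be the conjugates of the $H_i$ themselves, so after replacing each $H_i$ by $C_G(H_i)$ — which by the remark following Theorem~\ref{thm:finiteheight} is still quasiconvex, free, and non-abelian, and still of infinite index since $[C_G(H_i):H_i] < \infty$ — one obtains the desired malnormal collection.

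The main obstacle is controlling the intersections $H_i^g \cap H_j^h$: there is no reason a priori that distinct conjugates of the $H_i$ inside $G$ (not just inside $F$) intersect trivially, so malnormality is genuinely not free. The honest fix is to lean entirely on Lemma~\ref{lem:comms}, which does exactly this bookkeeping: it guarantees that whatever the maximal infinite intersections $K_1,\ldots,K_s$ are, the collection $\{C_G(K_j)\}$ is malnormal. One then only needs to check that each $C_G(K_j)$ is free non-abelian and of infinite index. Freeness and non-abelianness follow because $K_j$ is an infinite intersection of conjugates of free groups, hence infinite and (as a subgroup of a free group, after conjugating into some $H_i$) free, and non-abelian can be arranged by choosing the original ranks large enough or by noting $C_G(K_j)$ contains a non-abelian free subgroup whenever $K_j$ does; infinite index follows from $[C_G(K_j):K_j]<\infty$ together with $K_j$ having infinite index, which holds because $K_j \leq H_i^g$ for some $i,g$ and $H_i$ has infinite index in the non-elementary group $G$. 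The one subtlety to handle carefully in the writeup is ensuring $s \geq k$ and that the resulting commensurators are genuinely $k$ distinct (non-conjugate) subgroups — this is arranged by starting from enough independent pieces via repeated application of Theorem~\ref{thm:quasi}, so that the $H_i$ lie in distinct conjugacy classes with no containments among their commensurators.
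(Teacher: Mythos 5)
Your overall plan — produce quasiconvex free non-abelian subgroups via Theorem~\ref{thm:quasi}, then repair malnormality via Lemma~\ref{lem:comms}, starting from a rank-$2k$ quasiconvex free subgroup decomposed into rank-$2$ free factors $H_i$ — is close in spirit to the paper's. But there is a genuine gap where you argue that each $C_G(K_j)$ is free non-abelian. Nothing forces the maximal infinite intersections $K_j$ produced by Lemma~\ref{lem:comms} to be non-cyclic; in fact the typical infinite intersection of two distinct conjugates of a quasiconvex free subgroup of a hyperbolic group is cyclic. When $K_j$ is infinite cyclic, the remark after Theorem~\ref{thm:finiteheight} gives $[C_G(K_j):K_j]<\infty$, and since $G$ is torsion-free this forces $C_G(K_j)$ itself to be infinite cyclic — abelian, hence useless for your conclusion. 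Neither of your proposed fixes addresses this: raising the ranks of the $H_i$ has no bearing on whether conjugates of them intersect in cyclic subgroups, and ``$C_G(K_j)$ contains a non-abelian free subgroup whenever $K_j$ does'' is vacuous precisely in the bad case. A secondary issue you flag yourself is also real: nothing guarantees $s\geq k$ or that the resulting $C_G(K_j)$ lie in $k$ distinct conjugacy classes, and ``starting from enough independent pieces'' does not obviously arrange this.

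The paper sidesteps both problems by first reducing to producing a \emph{single} malnormal quasiconvex free non-abelian subgroup $J$: once $J=\langle a_1,b_1,\ldots,a_k,b_k\rangle$ is malnormal in $G$, the factors $H_i=\langle a_i,b_i\rangle$ automatically form a malnormal collection, since malnormality of $J$ kills intersections with conjugates by elements outside $J$, and free factors of a free group are mutually malnormal inside $J$. To find such a $J$, the paper iterates: starting from any quasiconvex free non-abelian $J_0$ of infinite index, if the finite lattice (Theorem~\ref{thm:finiteheight}) of infinite intersections of conjugates of $J_0$ contains a non-abelian free subgroup, pass to it and repeat. The iteration terminates either with a maximal intersection that is itself free non-abelian — where Lemma~\ref{lem:comms} gives the malnormal commensurator as in your sketch — or with a lattice whose members are all cyclic. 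That degenerate case is exactly what your argument misses, and it is handled by Claim~\ref{clm:separatingcyclics}, a free-group construction producing a rank-$2$ subgroup of $J_i$ forming a malnormal collection with any prescribed finite set of cyclic subgroups. You would need that claim, or a substitute for it, to close the gap.
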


Before proving Lemma~\ref{lem:cylics}, we make a few observations about malnormal subgroups of free groups. Recall that a subgroup $H <G$ is \emph{isolated} if whenever $g^n \in H$ for $g \in G$, then $g \in H$; a subgroup $H <G$ is \emph{malnormal on generators} if for any generating set $\{ a_1, \ldots, a_n \}$ for $H$ and $g \in G$ and $ g \notin H$, then $a_i^g \notin H$ for any $i \in \{1, \ldots, n\}$.

\begin{lemma}\label{lem:freemaln}\cite[Lemma 1]{FMR02}
Let $F$ be a free group and $H \subset F$ a 2-generator subgroup. Then $H$ is malnormal if and only if $H$ is isolated and malnormal on generators.
\end{lemma}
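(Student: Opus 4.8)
The plan is to prove the two implications separately; the forward one is a short general fact, and in the reverse one the rank-two hypothesis does all the work.

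\emph{Forward implication.} Assume $H$ is malnormal. If $g\in F\setminus H$ had $g^{n}\in H$ for some $n\geq 1$, then $g^{n}$ would be a nontrivial element of $H\cap H^{g}$ (it lies in $H$ by hypothesis, in $H^{g}$ because it is fixed by conjugation by $g$, and is nontrivial since free groups are torsion-free), forcing $g\in H$; so $H$ is isolated. Likewise, if $a_{i}$ is a generator of $H$ and $g\in F\setminus H$ with $a_{i}^{g}\in H$, then $a_{i}^{g}$ is a nontrivial element of $H\cap H^{g}$ (it lies in $H$ by hypothesis and in $H^{g}$ since $a_{i}\in H$), again forcing $g\in H$; so $H$ is malnormal on generators. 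Neither half uses that $H$ has rank two.

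\emph{Reverse implication.} Suppose $H=\langle a,b\rangle$ is isolated and malnormal on generators. If $H$ is trivial or infinite cyclic the statement is an elementary computation (a cyclic subgroup of $F$ is malnormal precisely when its generator is not a proper power, which is exactly isolatedness, and the generator condition is then automatic), so assume $H$ is free of rank two. Suppose, for a contradiction, that there is $g\in F\setminus H$ and $1\neq w_{0}\in H\cap H^{g}$. Since $H$, and hence its conjugate $H^{g}$, is isolated, the root $w$ of $w_{0}$ in $F$ again lies in $H\cap H^{g}$, so we may assume $w$ is not a proper power; set $u=gwg^{-1}\in H$, which is then also not a proper power. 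If $u$ and $w$ are conjugate already inside $H$, say $u=hwh^{-1}$ with $h\in H$, then $g^{-1}h$ commutes with $w$; since $w$ is not a proper power its centralizer in $F$ is $\langle w\rangle$, so $g^{-1}h\in\langle w\rangle\subseteq H$ and hence $g\in H$, a contradiction. So $u$ and $w$ are non-power elements of $H$ that are $F$-conjugate but not $H$-conjugate, and it remains to rule this out. Passing to the Stallings core graph $\Gamma$ of $H$ — a finite folded graph with first Betti number two that maps by an immersion to the rose of $F$ — the conjugacy classes of $w$ and of $u$ are carried by two \emph{distinct} immersed loops of $\Gamma$ whose images in the rose are the same cyclically reduced cyclic word (replacing $u$ by $u^{-1}$ if necessary). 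Going through the short list of topological types of rank-two core graphs, one finds that two such distinct loops force either an immersed loop of $\Gamma$ that is a proper root of a power of a basis element of $H$ — contradicting isolatedness — or a basis element of $H$ that is $F$-conjugate into $H$ by an element of $F\setminus H$ — contradicting malnormality on generators. This last step is the combinatorial core of \cite[Lemma~1]{FMR02}, whose argument I would follow.

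\emph{Main obstacle.} All of the above except the final step is routine: the forward direction, the reduction to $w$ a non-power via isolatedness, and the reduction to the non-$H$-conjugate case via the centralizer computation. The genuine work is in that last step — deriving one of the two failure conditions from a pair of distinct immersed loops of $\Gamma$ with the same image word — which needs the case analysis over the possible shapes of a rank-two folded graph, and which is precisely why the statement holds in rank two but fails for subgroups of higher rank.
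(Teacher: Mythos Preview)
The paper does not prove this lemma at all; it merely cites \cite[Lemma~1]{FMR02} and uses the statement as a black box in the proof of Claim~\ref{clm:separatingcyclics}. So there is no ``paper's own proof'' to compare your proposal against.

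As for your sketch itself: the forward implication is correct and complete, and your reduction in the reverse direction (pass to a root using isolatedness, then to the case where $u$ and $w$ are $F$-conjugate but not $H$-conjugate via the centralizer argument) is sound. You then explicitly defer the combinatorial core --- the Stallings-graph case analysis over rank-two folded graphs --- to \cite{FMR02}. In that sense you and the paper land in exactly the same place: both rely on the cited reference for the real content. One minor wording issue: your alternative (a) in the dichotomy is phrased oddly, since an immersed loop of $\Gamma$ already represents an element of $H$ and so cannot by itself witness a failure of isolatedness; what the analysis actually produces is an element of $F\setminus H$ with a power in $H$. But since you are handing this step to \cite{FMR02} anyway, this is cosmetic rather than a genuine gap.
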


\begin{claim}\label{clm:separatingcyclics}
Let $F$ be a finite rank non-abelian free group and let $\{h_1, \ldots, h_k\}$ be a finite collection of non-trivial elements of $F$. Then there is a subgroup $J < F$ for which $\{J , \langle h_i \rangle\}$ is a malnormal collection for each $i \in \{1, \ldots, k\}$.
\end{claim}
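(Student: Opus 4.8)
The plan is to take $J$ to be the cyclic subgroup generated by a single element $w$ that is long and is not a proper power; this already suffices for the statement (if a free non-abelian $J$ is wanted for later use, replace $\langle w\rangle$ by a rank-$2$ subgroup, as in the remark at the end). First unwind what is asked. Saying $\{J,\langle h_i\rangle\}$ is a malnormal collection amounts to the four conditions: $J^g\cap J=1$ for $g\notin J$; $\langle h_i\rangle^g\cap\langle h_i\rangle=1$ for $g\notin\langle h_i\rangle$; and $J^g\cap\langle h_i\rangle=1$ and $\langle h_i\rangle^g\cap J=1$ for \emph{every} $g\in F$. The second condition says $\langle h_i\rangle$ is malnormal in $F$, which in a free group holds exactly when $h_i$ is not a proper power; I assume this (it is what occurs in the intended applications, and otherwise the statement is to be read with $\langle h_i\rangle$ replaced by the maximal cyclic subgroup containing it, to which the argument applies verbatim). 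Writing out conjugates, the last two conditions together assert precisely that $w^m$ and $h_i^n$ are non-conjugate in $F$ for all nonzero $m,n$.

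Now the construction. Let $\ell$ be the maximum of the lengths of the cyclically reduced forms of $h_1,\dots,h_k$, so $\ell\geq 1$; fix two distinct basis elements $a,b$ of $F$ (whose rank is $\geq 2$); and set $w=ab^{\,\ell+1}$ and $J=\langle w\rangle$. Then $w$ is cyclically reduced of cyclic length $\ell+2>\ell$, and it is not a proper power: the single occurrence of $a$ forbids any period of length at most $\ell+1$, whereas a proper power would have a period of length at most $(\ell+2)/2\leq\ell+1$. It is standard that a cyclic subgroup of a free group is malnormal if and only if its generator is not a proper power --- the nontrivial implication follows from the fact that the centraliser of a nontrivial element of a free group is the maximal cyclic subgroup containing it, together with the fact that no nontrivial element of a free group is conjugate to its inverse. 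Hence $J=\langle w\rangle$ is malnormal in $F$, giving the first of the four conditions.

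It remains to show $w^m$ and $h_i^n$ are non-conjugate for all $i$ and all nonzero $m,n$. After replacing $w$ by $w^{-1}$ and $h_i$ by $h_i^{-1}$ if necessary, assume $m,n>0$, and suppose $w^m\sim h_i^n$. Cyclically reducing, and using that conjugate cyclically reduced words are cyclic permutations of one another, the words $\bar w^{\,m}$ and $\bar h_i^{\,n}$ --- the cyclically reduced forms, genuine powers of the cyclically reduced forms $\bar w$ of $w$ and $\bar h_i$ of $h_i$ --- are cyclic permutations of each other; comparing lengths, $m|\bar w|=n|\bar h_i|$. Since $|\bar w|=\ell+2>\ell\geq|\bar h_i|$, this forces $n>m\geq 1$, so $n\geq 2$ and $\bar h_i^{\,n}$, hence also its cyclic permutation $\bar w^{\,m}$, has period $|\bar h_i|\leq\ell<|\bar w|$. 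If $m=1$ then $\bar w$ is a cyclic permutation of the proper power $\bar h_i^{\,n}$, hence is itself a proper power, a contradiction. If $m\geq 2$ then $\bar w^{\,m}$ has the two periods $|\bar h_i|$ and $|\bar w|$ and length $m|\bar w|\geq|\bar w|+|\bar h_i|$, so by the Fine--Wilf theorem it has period $d=\gcd(|\bar w|,|\bar h_i|)$, a proper divisor of $|\bar w|$; since the first $|\bar w|$ letters of $\bar w^{\,m}$ spell $\bar w$, the word $\bar w$ is a $\bigl(|\bar w|/d\bigr)$-th power, again a contradiction. So $w^m\not\sim h_i^n$; all four conditions hold; and $\{J,\langle h_i\rangle\}$ is a malnormal collection for every $i$.

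The delicate point is the periodicity bookkeeping of the last paragraph, in particular isolating the case $m=1$; the rest is routine. If one prefers $J$ free of rank $2$, the same scheme works with $J=\langle w_1,w_2\rangle$, where $w_1,w_2$ are two long ``staircase'' words in $a,b$ (for instance $a^2b^{\ell+1}a^2b^{\ell+2}\cdots$ and $a^3b^{\ell+1}a^3b^{\ell+2}\cdots$ of large length) whose symmetrisation satisfies a strong $C'(\tfrac1\lambda)$ condition: this makes $J$ free of rank $2$ and, via the Greendlinger-type lemma, isolated and malnormal on generators --- hence malnormal in $F$ by Lemma~\ref{lem:freemaln} --- while forcing every nontrivial element of $J$ to be conjugate to a cyclically reduced word containing a subword that is more than half of a cyclic conjugate of some $w_j^{\pm1}$, something the near-periodic power $h_i^n$ cannot contain; that last statement is the analogue of the periodicity argument and is the main extra obstacle in this variant.
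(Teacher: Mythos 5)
Your primary construction — taking $J=\langle w\rangle$ with $w=ab^{\ell+1}$ and ruling out conjugacies $w^m\sim h_i^n$ via the Fine--Wilf periodicity lemma — is correct and proves the claim as literally stated, and it is a genuinely different argument from the paper's. The paper instead builds $J$ directly as a rank-$2$ subgroup $\langle a,b\rangle$, where $a=h_1\beta_1\cdots h_k\beta_k$ and $b=\beta_1'h_1\cdots\beta_k'h_k$ are long interleavings of the $h_i$'s with carefully chosen ``noise'' words $\beta_i,\beta_i'$ forbidding cancellation, and then verifies malnormality through the isolated-and-malnormal-on-generators criterion of Lemma~\ref{lem:freemaln}. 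Your periodicity route is cleaner for the cyclic case; the paper's interleaving route is set up to give rank $2$ from the start. You are also right to flag that the ``$i=j$'' part of the malnormal-collection condition for $\langle h_i\rangle$ silently requires $h_i$ not to be a proper power (the paper does not address this either), and your remark about passing to maximal cyclic subgroups is the correct fix.

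There is, however, a gap you should not leave implicit. The claim is invoked in the proof of Lemma~\ref{lem:cylics} precisely to produce a \emph{free non-abelian} $J$ inside a cyclic lattice $\mathcal{J}_i$, so the cyclic $J$ of your main argument does not suffice for the paper's actual use of the statement. Your rank-$2$ fallback is only a sketch, and the two load-bearing steps in it are asserted rather than proved: (i) that a $C'(\tfrac1\lambda)$ condition on the symmetrisation of $\{w_1,w_2\}$ forces $J=\langle w_1,w_2\rangle$ to be isolated and malnormal on generators --- Greendlinger's lemma controls elements of the normal closure $\langle\langle w_1,w_2\rangle\rangle$, not conjugates of elements of the subgroup $\langle w_1,w_2\rangle$, so some further argument (e.g.\ a Stallings-graph or piece-cancellation argument about the specific reduced forms, which is essentially what the paper does) is needed here; and (ii) that no sufficiently long subword of $h_i^n$ can be a large piece of a cyclic conjugate of $w_j^{\pm1}$ --- this fails for the staircase words as written, since their $b$-blocks grow without bound and any all-$b$ subword has period $1\leq\ell$, so the intended periodicity contradiction must be restated more carefully (e.g.\ by capping block lengths or forcing an $a$ into every window of length $2\ell$). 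As it stands, the rank-$2$ case, which is what the application needs, is not established.
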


\begin{proof}
Assume that a basis is given for $F$, and, abusing notation, write also $h_1, \ldots , h_k$ to denote reduced words on the basis representing the finite set of elements $h_1, \ldots, h_k$. We may assume also that $h_i \neq h^{m}_j$ whenever $m \in \integers -\{0\}$ and $i \neq j$. Let $J=\langle a,b\rangle$
 where $a= h_1 \beta_1 \ldots h_k \beta_k, b= \beta'_1 h_1  \ldots \beta'_k h_k$, and where each $\beta_i$ and $\beta'_i$ is a reduced word on the basis satisfying that,
 for each $i \in \{1, \ldots, k\}$,

\begin{enumerate}
\item \label{item:pf1}$\beta_i \neq \beta^m_j$, $\beta'_i \neq (\beta'_j)^m$, and $\beta_i \neq (\beta'_j)^m$ whenever $i \neq j$ and $m \in \integers -\{0\}$,
\item \label{item:pf2} no $\beta_i, \beta'_i$ is a product of $h_i$'s or their inverses,
\item \label{item:pf3} for each $\beta_i$, its first letter is not equal to the last letter of $h_i$, and its last letter is not equal to the first letter of $h_{i+1}$ (modulo $k$). Similarly, for each $\beta'_i$,  its last letter is not equal to the first letter of $h_i$, and its first letter is not equal to the last letter of $h_{i+1}$ (modulo $k$). Moreover, the last letter of $\beta_k$ is not equal to the inverse of the last letter of $h_k$, the first letter of $\beta_1'$ is not equal to the inverse of the first letter of $h_1$, and the last letter of $\beta_k$ is not equal to the first letter of $\beta_1'$.
\end{enumerate}

 As there is no cancellation, $J$  is a rank-2 free group. By Lemma~\ref{lem:freemaln}, to prove that $J$ is malnormal it suffices to show that $J$ is isolated and malnormal on generators. The choice of the $\beta_i, \beta'_i$ implies in particular that $a$ and $b$ are not proper powers, and this implies in turn that $J$ is isolated, since $F$ is free. We now show that $J$ is malnormal on generators. Consider a conjugate $a^g=(h_1 \beta_1 \ldots h_k \beta_k)^g$ where $g \notin J$ (the case of $b^g$ is analogous). Since $g \notin J$, $g$ cannot be written as a non-trivial product of powers of $a$ and $b$ and their inverses. If $g$ cannot be written as a subword of a product of $a$'s  and $b$'s and their inverses, then $a^g$ cannot be an element of $J$ as there will be no cancellation. The choice of $a$ and $b$ implies that no cyclic permutation of $a$, $b$, their product or their proper powers lies in $J$, so no conjugate of $a$ by a subword of a product of $a$'s  and $b$'s and their inverses lies in $J$.  

Finally, consider a conjugate $h^g_i$ of $h_i$. If $\langle h^g_i \rangle\cap J$ is non-trivial, then since $F$ is free, it follows that $g$ must be a subword of some $j \in J$, and even in this case $h^g_i$ can only be a (non-trivial) cyclic permutation of of $a$, $b$, their product or their proper powers, but no such cyclic permutation is an element of $J$, so $J$ intersects all conjugates of $h_i$'s trivially. 
\end{proof}

\begin{proof}[Proof of Lemma~\ref{lem:cylics}]
It suffices to show that $G$ contains a malnormal quasiconvex free subgroup $J$ of arbitrarily high rank, for then if $J=\langle a_1, \ldots, a_k, b_1, \ldots, b_k\rangle$, the collection $\{H_1, \ldots, H_k\}$ where $H_i=\langle a_i,b_i \rangle$ is also malnormal and quasiconvex. For this, it suffices to show that $G$ contains a malnormal quasiconvex free group $J$ of \emph{some} rank $\geq 2$. Indeed, for any $n \in \integers$, $J$ contains infinitely many subgroups of rank $n$, all of which are malnormal and quasiconvex.

By Theorem~\ref{thm:quasi}, $G$ contains a free non-abelian quasiconvex subgroup $J_0$; we may assume further that $J_0$ has infinite-index in $G$. Let $\mathcal{J}_0$ be the lattice of infinite intersections of conjugates of $J_0$: 
this lattice is finite by Theorem~\ref{thm:finiteheight}. If $\mathcal{J}_0$ contains a non-abelian free group $J_1$ then we replace $J_0$ with $J_1$, and we can repeat this process a finite number of times until we either reach a maximal intersection of conjugates of some $J_i$ that is itself free non-abelian or until all subgroups in the lattice $\mathcal{J}_i$ are cyclic.
In the former case, the commensurator $C_G(J_i)$ is malnormal and quasiconvex by Lemma~\ref{lem:comms}. In the latter case, by Claim~\ref{clm:separatingcyclics}, $J_i$ contains a free non-abelian subgroup $J$ that forms a malnormal collection with each of these cyclic subgroups, hence $J$ is malnormal in $G$.
\end{proof}

Lemma~\ref{lem:cylics} can be improved to control intersections with quasiconvex subgroups: 

\begin{corollary}\label{cor:stabs}
Let $G$ be a non-elementary, torsion-free hyperbolic group and let $\{S_1, \ldots, S_\ell\}$  be a collection of quasiconvex subgroups of $G$. 
Then the collection  $\{H_1, \ldots, H_k\}$ from the conclusion of Lemma~\ref{lem:cylics} can be chosen so that $H_i \cap S_j$ is either trivial or equal to $H_i$ for each $i\in \{1, \ldots, k\}$ and each $j\in \{1, \ldots, \ell\}$.
\end{corollary}

\begin{remark}\label{rmk:stabs}
In particular, if $G$ is the fundamental group of a compact non-positively curved cube complex $X$, and $\mathcal{H}$ is the set of hyperplanes of $X$, then $\{H_1, \ldots, H_k\}$ can be chosen so that $H_i \cap Stab(\widetilde{U})$ is either trivial or equal to $H_i$ for each $U \in \mathcal{H}$ and each $i\in \{1, \ldots, k\}$. Indeed, since  $X$ is compact, it has finitely many hyperplanes, and hence finitely many hyperplane stabilisers. Each hyperplane stabiliser is quasi-isometrically embedded and  $\pi_1 X$ is hyperbolic, so each hyperplane stabiliser is quasiconvex. 
\end{remark}



\begin{proof}[Proof of Corollary~\ref{cor:stabs}]
It suffices to prove the result for a single malnormal, quasiconvex non-abelian subgroup $J\leq G$. Indeed, as explained in the first paragraph of the proof of Lemma~\ref{lem:cylics}, the subgroups in the malnormal collection $\{H_1, \ldots, H_k\}$ are produced as subgroups of a single non-abelian free subgroup $J$, so if we ensure that $J \cap S_j$ is either trivial or equal to $J$ for each $j \in \{1, \ldots, \ell\}$, then this will also be the case for $\{H_1, \ldots, H_k\}$.

We now proceed by induction on $\ell$. Assume that  $\ell=1$ and consider the intersection $J\cap S_1$, where $J$ is as provided in Lemma~\ref{lem:cylics}. If this intersection is trivial, then there is nothing to show, so suppose that $K_1:=J\cap S_1$ is non-trivial, so $K_1$ is either cyclic or free of rank $\geq 2$. If $K_1$ is cyclic, say generated by $k_1$, then as $J$ is free, by Claim~\ref{clm:separatingcyclics} there exists a $J'\leq J$ such that $\{J', \langle k_1\rangle\}$  is malnormal, so in particular $J' \cap S_1$ is trivial; if $K_1$ is free of rank $\geq 2$, then since $K_1$ is quasiconvex, Lemma~\ref{lem:cylics}  implies that there exists a quasiconvex, non-abelian free subgroup $J''\leq K_1$ that is malnormal in $G$, so that $J''\cap S_1=J''$. 

Now assume that the result holds for  $m=\ell-1$ and let $\{S_1, \ldots, S_\ell\}$ be a collection of quasiconvex free non-abelian subgroups.
Then by the induction hypothesis and Lemma~\ref{lem:cylics}, there is a quasiconvex non-abelian subgroup $J< G$ such that $J \cap S_i$ is trivial or equal to $J$ for each $i \in \{1, \ldots, m\}$. As before, if $K_\ell:=J \cap S_\ell=\{1\}$ then there is nothing to show, if $K_\ell$ is cyclic then by Claim~\ref{clm:separatingcyclics} there exists a $J'\leq J$ such that $J' \cap S_\ell$ is trivial, and since $J'\leq J$ then it is still the case that $J' \cap S_i$ is trivial or equal to $J'$ for each $i \in \{1, \ldots, m\}$.  
Finally, if $K_\ell$ is non-abelian then since it is quasiconvex, Lemma~\ref{lem:cylics} produces a new $J''$ inside $K_\ell$ for which $J'' \cap S_\ell=J''$. Since $J'' < J$, then $J''\cap S_i$ is trivial or equal to $J''$ for each $i \in \{1, \ldots, m\}$, and the result follows.
\end{proof}


\section{Main theorem}\label{sec:main}

\begin{theorem}\label{thm:main}
Let  $Q$ be a finitely presented group and $G$ be the fundamental group of a compact special cube complex $X$. If $G$ is hyperbolic and non-elementary, then there is a short exact sequence

\begin{equation}\label{eq:ses}
1 \rightarrow N \rightarrow \Gamma \rightarrow Q \rightarrow 1
\end{equation}

where \begin{enumerate}
\item $\Gamma$ is a hyperbolic, cocompactly cubulated group,
\item $N \cong G/K$  for some $K < G$,
\item \label{thmcc:3}$max\{cd(G),2\}\geq cd(\Gamma)\geq cd(G)-1$. In particular, $\Gamma$  is torsion-free.

\end{enumerate}
\end{theorem}

In what follows, we prove all parts of Theorem~\ref{thm:main} except for~\eqref{thmcc:3}, which we explain in the next section.

\begin{remark} Cocompact cubulability of $\Gamma$ hinges on the specialness of  $X$ as this is the hypothesis that is used in Theorem~\ref{thm:cyclic}. However, as noted in Remark~\ref{rmk:special}, in reality all that is needed is for the hyperplanes of $X$ to be embedded and 2-sided.
\end{remark}

\begin{proof}

Choose a finite presentation $\langle a_1, \ldots, a_s | r_1, \ldots, r_k \rangle$ for $Q$, let $B$ be a bouquet of $s$ circles $a_1, \ldots, a_s$ ,
and let $X$ be a compact non-positively curved cube complex  with $\pi_1 X =G =\langle x_1, \ldots, x_m \rangle$. 

By Lemma~\ref{lem:cylics} and Remark~\ref{rmk:stabs}, there is a malnormal collection $\{H_\ell\} \cup \{H'_{ij}\} \cup \{H''_{ij}\} $  of quasiconvex free subgroups of rank $\geq 2$ of $\pi_1 (X\vee B)$, so we can apply Theorem~\ref{thm:cyclic} to $X$ and $B$, where the $y_i$'s are given by:

 \begin{enumerate}
 \item $y_\ell= r_\ell $ for each $1 \leq \ell \leq k$, 
 \item $y'_{ij}= a_i x_ja_i^{-1}$ for each $1 \leq i \leq  s$,  $1 \leq j \leq m$, 
 \item $y''_{ij}= a_i^{-1} x_j a_i$ for each $1 \leq i \leq  s$,  $1 \leq j \leq m$, 
 \end{enumerate}

Hence, there are words $w_1, \ldots, w_\ell, w'_{1,1}, \ldots, w'_{ij}, w''_{1,1}, \ldots, w''_{ij} \in \pi_1 X$ for which the group $$\Gamma=G*\pi_1 B \big/ \langle\langle \{r_\ell w_\ell\}^k_{\ell=1}, \{a_i x_ja_i^{-1} w'_{ij}\}^{s,m}_{i=1,j=1}, \{a_i^{-1} x_j a_i w''_{ij}\}^{s,m}_{i=1,j=1} \rangle\rangle$$ is hyperbolic and acts properly and
cocompactly on a CAT(0) cube complex.

There is a homomorphism $\Gamma \overset{\phi} \longrightarrow \pi_1B$ that sends every generator of $\pi_1 X$ to $1$. Hence the set of relations $\{r_\ell w_\ell=1 \}_\ell$  map exactly to the relations $\{r_\ell=1\}_\ell$ in $\pi_1 B$, and so we see that the image of the homomorphism is $Q$.

The relations $\{ a_i x_ja_i^{-1} w'_{ij} =1 \}_{i \in S, j \in M},$ and $\{ a_i^{-1} x_j a_i w''_{ij}=1 \}_{i \in S, j \in M} $ ensure that $\langle x_1, \ldots, x_m \rangle$ is a normal subgroup of $\Gamma$ so $N=ker\phi=\langle x_1, \ldots, x_m \rangle$ and $N\cong \pi_1 X/K$ for some subgroup  $K < G$. 
\end{proof}

\section{Cohomological dimension}\label{sec:coho}

We briefly recall some standard facts
 about the cohomological dimension of groups. We refer the reader to~\cite{BrownBook82and94} for more details and proofs.

A \emph{resolution} for a module $M$ over a ring $R$ is a long exact sequence of $R-$modules
\[ \cdots \rightarrow M_n \rightarrow M_{n-1} \rightarrow \cdots \rightarrow M_1 \rightarrow M_0 \rightarrow M \rightarrow 0\]

A resolution is \emph{finite} if only finitely many of the $M_i$ are non-zero. The \emph{length} of a finite resolution is the maximum integer $n$ such that $M_n$ is non-zero.  A resolution is \emph{projective} if each $M_i$ is a projective module.

\begin{definition} The \emph{cohomological dimension} $cd(G)$ of a group $G$ is the length of the shortest projective resolution of $\integers$ as a trivial $\integers G$-module.
\end{definition}

There is a natural topological analogue of cohomological dimension:

\begin{definition} The \emph{geometric dimension} $gd(G)$ of a group $G$ is the least dimension of a classifying space for $G$.
\end{definition}

\begin{remark}
The cellular chain complex of a classifying space for a group $G$ yields a free (in particular, projective) resolution of $\integers$ over $\integers G$, the length of which is equal to the dimension of the classifying space. This implies immediately that $cd(G) \leq gd(G)$ for any group $G$. In particular, if $G$ is free, then $cd(G) = 1$.
\end{remark}

\begin{remark} The universal covers of non-positively curved cube complexes are CAT(0) spaces, and hence are contractible, so every non-positively curved cube complex $X$ is a classifying space for its fundamental group. Therefore, if $G=\pi_1 X$ for a compact non-positively curved cube complex $X$, then the cohomological dimension of $G$ is bounded above by the dimension of $X$.  
\end{remark}

\begin{proposition}The following hold for any group $G$:
\begin{enumerate}
\item If $G' < G$ then $cd(G') \leq cd (G)$
and equality holds provided that $cd (G) \leq \infty$ and $[G:G']< \infty.$
\item If  $1 \longrightarrow G' \longrightarrow G \longrightarrow G''  \longrightarrow 1$ is exact, then
$cd(G) \leq cd(G') + cd(G'').$
\item If $G=G_1 \ast G_2$, then $cd (G) = max\{cd (G_1), cd(G_2)\}.$
\end{enumerate}
\end{proposition}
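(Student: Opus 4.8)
The plan is to treat the three items in turn; each is a classical fact from the homological algebra of groups, so I indicate only the argument I would give, with full details in~\cite{BrownBook82and94}.

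\emph{Item (1).} For $cd(G')\le cd(G)$: since $G'\le G$, the ring $\integers G$ is free, hence flat, as a right $\integers G'$-module, so restriction of scalars carries projective $\integers G$-modules to projective $\integers G'$-modules; applying this termwise to a projective $\integers G$-resolution of $\integers$ of length $cd(G)$ yields a projective $\integers G'$-resolution of $\integers$ of the same length. For the equality when $[G:G']<\infty$ and $cd(G)<\infty$, this is Serre's theorem; one must show $cd(G)\le cd(G')=:n$, i.e.\ $\operatorname{pd}_{\integers G}(\integers)\le n$. I would argue that the permutation module $\integers[G/G']=\operatorname{Ind}_{G'}^{G}\integers$ has projective dimension $\le n$ over $\integers G$ --- because $\integers$ has projective dimension $\le n$ over $\integers G'$ and induction $\integers G\otimes_{\integers G'}(-)$ is exact and sends projectives to projectives --- and then pass from $\integers[G/G']$ to the trivial module $\integers$ by a d\'evissage that exploits that $\integers$ is $\integers$-free and that $G$ is torsion-free (forced by $cd(G)<\infty$). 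This last d\'evissage is the one genuinely non-formal ingredient in the proposition, and I expect it to be the main obstacle; it is classical, being the substance of Serre's theorem.

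\emph{Item (2).} I would invoke the Lyndon--Hochschild--Serre spectral sequence of the extension $1\to G'\to G\to G''\to 1$, namely $E_2^{p,q}=H^p\!\big(G'';H^q(G';M)\big)\Rightarrow H^{p+q}(G;M)$ for an arbitrary $\integers G$-module $M$. If $cd(G')=m$ then $H^q(G';M)=0$ for $q>m$, and if $cd(G'')=n$ then $H^p(G'';A)=0$ for $p>n$ and every $\integers G''$-module $A$; hence $E_2^{p,q}=0$ whenever $p+q>m+n$, so the same holds for $E_\infty^{p,q}$, and therefore $H^{p+q}(G;M)=0$ for $p+q>m+n$. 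As $M$ was arbitrary, $cd(G)\le cd(G')+cd(G'')$ (there is nothing to prove when either term is infinite).

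\emph{Item (3).} The inequality $cd(G)\ge\max\{cd(G_1),cd(G_2)\}$ is immediate from Item~(1), since $G_1,G_2<G=G_1\ast G_2$; note that $G$ is trivial exactly when both $G_i$ are, in which case both sides vanish, and that otherwise $\max\{cd(G_1),cd(G_2)\}\ge 1$. For the reverse inequality I would use the Bass--Serre tree of $G_1\ast G_2$, which has vertex stabilisers conjugate to $G_1$ or $G_2$ and trivial edge stabilisers; its augmented simplicial chain complex is a short exact sequence of $\integers G$-modules
\[ 0\to\integers G\to(\integers G\otimes_{\integers G_1}\integers)\oplus(\integers G\otimes_{\integers G_2}\integers)\to\integers\to 0 . \]
Inducing up to $\integers G$ projective $\integers G_i$-resolutions of $\integers$ of length $cd(G_i)$ (again, induction is exact and preserves projectives) resolves the middle term by projectives of length $\max\{cd(G_1),cd(G_2)\}$, and since the left-hand term $\integers G$ is free, splicing yields a projective $\integers G$-resolution of $\integers$ of length $\max\{cd(G_1),cd(G_2),1\}$, which equals $\max\{cd(G_1),cd(G_2)\}$ unless $G$ is trivial. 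Equivalently, one reads the bound off the Mayer--Vietoris sequence $\cdots\to H^k(G;M)\to H^k(G_1;M)\oplus H^k(G_2;M)\to H^k(1;M)\to H^{k+1}(G;M)\to\cdots$, whose terms $H^k(1;M)$ vanish for $k\ge 1$.
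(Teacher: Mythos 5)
The paper gives no proof of this proposition, stating only that proofs can be found in Brown's book~\cite{BrownBook82and94}; your sketches are precisely the standard arguments from that reference (restriction of scalars and Serre's theorem for item~(1), the Lyndon--Hochschild--Serre spectral sequence for item~(2), and the Bass--Serre tree/Mayer--Vietoris for item~(3)), and they are correct. One small caveat: your dévissage step for Serre's theorem is phrased a little loosely --- the usual route is to truncate a free $\integers G$-resolution of $\integers$ in degree $n = cd(G')$, observe that the $n$-th kernel is $\integers$-free and $\integers G'$-projective, and then invoke the lemma that for torsion-free $G$ and finite-index $G'$ such a module is already $\integers G$-projective --- but you correctly identify this as the one non-formal ingredient and correctly locate it in Brown, so no harm is done.
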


The following result is a consequence of Proposition~\ref{prop:cohenlyndon}, stated below.

\begin{proposition} Let $G$, $Q$, $B$ and $\Gamma$ be as in Theorem~\ref{thm:main} and let $q:G \ast \pi_1B \rightarrow \Gamma$ be the natural quotient. Then $Ker(q)$ is free.
\end{proposition}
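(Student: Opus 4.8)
The plan is to realise $\mathrm{Ker}(q)$ as a free product of infinite cyclic groups and then use the elementary fact that a free product of free groups is free. Write $F = G \ast \pi_1 B$. By the construction in the proof of Theorem~\ref{thm:main}, we have $\Gamma = F\big/\langle\langle z_1, \ldots, z_r\rangle\rangle$, where $z_1, \ldots, z_r$ is the list of relators $r_\ell w_\ell$, $a_i x_j a_i^{-1}w'_{ij}$, $a_i^{-1}x_j a_i w''_{ij}$ produced there by applying Theorem~\ref{thm:cyclic}. Since $q$ is the quotient map, $\mathrm{Ker}(q) = \langle\langle z_1, \ldots, z_r\rangle\rangle$ is precisely the normal closure in $F$ of the cyclic subgroups $\langle z_i\rangle = \pi_1 W_i$.

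The first step is to invoke Proposition~\ref{prop:cohenlyndon}, the Cohen--Lyndon-type statement, which applies to the cubical presentation $(X \vee B)^\ast$ because the small cancellation hypotheses needed for it were already verified in the proof of Theorem~\ref{thm:cyclic}. It provides, for each $i$, a set $T_i \subset F$ of coset representatives such that $\mathrm{Ker}(q) = \langle\langle z_1, \ldots, z_r \rangle\rangle$ is the free product of the conjugate subgroups $t\langle z_i \rangle t^{-1}$, taken over all $1 \le i \le r$ and all $t \in T_i$.

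The second step is to observe that each $\langle z_i\rangle$ is infinite cyclic. The group $F = G \ast \pi_1 B$ is a free product of torsion-free groups, hence torsion-free, and by the choices made in Theorem~\ref{thm:cyclic} none of the $z_i$ is a proper power; therefore $\langle z_i\rangle \cong \integers$. (Here one uses that $W_i \to X \vee B$ is a local isometry, so that $\pi_1 W_i$ injects into $F$ with image exactly $\langle z_i\rangle$.) Each conjugate $t\langle z_i\rangle t^{-1}$ is then also infinite cyclic, in particular free, so the free product described above is a free product of free groups, and hence free. This gives the claim.

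The genuine content lies entirely in Proposition~\ref{prop:cohenlyndon}: establishing the Cohen--Lyndon property for the cubical presentation is the real obstacle, whereas the deduction above is formal once that decomposition is in hand. The only minor points requiring attention are the identification of $\pi_1 W_i$ with a copy of $\integers$ inside $F$ via the local isometry, and the fact that the transversals $T_i$ are well-defined --- both of which rest on the malnormality and $C'(\tfrac{1}{\alpha})$ hypotheses already secured in the proof of Theorem~\ref{thm:cyclic}.
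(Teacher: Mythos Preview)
Your approach is essentially the paper's: both deduce freeness of $\mathrm{Ker}(q)$ from the Cohen--Lyndon decomposition furnished by Proposition~\ref{prop:cohenlyndon}, exhibiting the kernel as a free product of free groups. There is one small mismatch worth flagging. As stated in the paper, Proposition~\ref{prop:cohenlyndon} concerns the triple $(G\ast\pi_1 B,\{H_\iota\},\langle\langle c_\iota\rangle\rangle_{H_\iota})$, where the $H_\iota$ are the ambient rank~$\geq 2$ free subgroups and $N_\iota=\langle\langle c_\iota\rangle\rangle_{H_\iota}$ is the normal closure of the relator \emph{inside} $H_\iota$; the free factors in the decomposition of $\mathrm{Ker}(q)$ are therefore conjugates of these $N_\iota$, not of the cyclic groups $\langle z_i\rangle$. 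This does not harm your argument: each $N_\iota$ is a subgroup of the free group $H_\iota$, hence free by Nielsen--Schreier, and the rest of your deduction goes through verbatim. If instead you wanted to apply Sun's theorem directly with $H_i=\langle z_i\rangle$ and $N_i=\langle z_i\rangle$, you would need to argue separately that the cyclic collection is malnormal and that the finite ``bad set'' can be avoided when $N_i=H_i$; the paper does not supply this, so it is cleaner to use the version it actually states.
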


\begin{proposition} $\Gamma$ can be chosen so that $cd(\Gamma)\geq cd(G)-1$.
\end{proposition}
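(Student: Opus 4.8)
The plan is to extract the inequality from the short exact sequence
\[ 1 \longrightarrow Ker(q) \longrightarrow G \ast \pi_1 B \overset{q}{\longrightarrow} \Gamma \longrightarrow 1, \]
together with the fact, recorded in the preceding proposition and ultimately a consequence of the Cohen--Lyndon-type statement Proposition~\ref{prop:cohenlyndon}, that $Ker(q)$ is free provided the defining data of $\Gamma$ (the words $w_{\bullet}$ and the small cancellation parameter $\alpha$) are chosen suitably large. This is precisely the content of the phrase ``$\Gamma$ can be chosen''.

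Granting that $Ker(q)$ is free, it has cohomological dimension at most $1$, so the subadditivity of $cd$ under extensions (recalled in the proposition above), applied to the displayed sequence, gives
\[ cd(G \ast \pi_1 B) \;\leq\; cd(Ker(q)) + cd(\Gamma) \;\leq\; 1 + cd(\Gamma). \]
On the other hand, since $\pi_1 B$ is free, $cd(\pi_1 B) \leq 1$, so by the free-product formula for $cd$,
\[ cd(G \ast \pi_1 B) \;=\; \max\{cd(G),\, cd(\pi_1 B)\} \;\geq\; cd(G). \]
Combining these two estimates yields $cd(G) \leq 1 + cd(\Gamma)$, i.e. $cd(\Gamma) \geq cd(G) - 1$, as required; together with the complementary upper bound $cd(\Gamma) \leq \max\{cd(G), 2\}$ this completes part (3) of Theorem~\ref{thm:main}.

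The argument is entirely formal once freeness of $Ker(q)$ is in hand, so the real work lives in Proposition~\ref{prop:cohenlyndon} and in the preceding proposition that deduces this freeness; I do not anticipate any obstacle in the present step. The one subtlety worth flagging is the choice of extension: subadditivity must be applied to the $q$-sequence above, not to $1 \to N \to \Gamma \to Q \to 1$ of Theorem~\ref{thm:main}, since the latter bounds $cd(\Gamma)$ only from above.
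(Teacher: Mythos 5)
Your proof is correct and follows essentially the same route as the paper: both use the short exact sequence $1 \to Ker(q) \to G \ast \pi_1 B \to \Gamma \to 1$, the freeness of $Ker(q)$ (from the Cohen--Lyndon property), subadditivity of $cd$ under extensions, and the free-product formula $cd(G \ast \pi_1 B) = \max\{cd(G), 1\}$. Your closing remark correctly identifies why subadditivity must be applied to the $q$-sequence rather than the sequence $1 \to N \to \Gamma \to Q \to 1$, a point the paper leaves implicit.
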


\begin{proof}
There is a short exact sequence 

$$1 \longrightarrow Ker(q) \longrightarrow G\ast \pi_1 B \longrightarrow \Gamma \longrightarrow 1.$$

Since $Ker(q)$ is a free group and $cd(G\ast \pi_1 B)=\max \{cd(G), 1\}=cd(G)$, then $cd(G)\leq cd(\Gamma) + cd(Ker(q))=cd(\Gamma)+1$.
\end{proof}

The torsion-freeness of $\Gamma$ will follow from having a finite upperbound on its cohomological dimension:

\begin{proposition}\label{prop:upperbound}
$\Gamma$ can be chosen so that  $max\{cd(G),2\}\geq cd(\Gamma)$
\end{proposition}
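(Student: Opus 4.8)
The plan is to bound $cd(\Gamma)$ from above by producing a classifying space for $\Gamma$ of dimension $\max\{cd(G),2\}$, or equivalently by bounding $gd(\Gamma)$ (and then invoking $cd\le gd$), using the cubical presentation and its dual cube complex. First I would recall that $\Gamma=\pi_1(X\vee B)^*$ acts properly and cocompactly on the CAT(0) cube complex $C$ dual to the wallspace constructed in the proof of Theorem~\ref{thm:cyclic}; since $\Gamma$ is torsion-free and hyperbolic, the action is free, so $C/\Gamma$ is a compact non-positively curved cube complex that is a classifying space for $\Gamma$, giving $cd(\Gamma)\le\dim C$. Thus the entire problem reduces to bounding the dimension of the dual cube complex $C$, i.e.\ bounding the size of collections of pairwise-crossing walls of $(X\vee B)^*$.

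The key step is therefore a dimension count for $C$. Walls of $\widetilde{(X\vee B)^*}$ are equivalence classes of hyperplanes under the relation of Definition~\ref{def:eqrel}; a family of $d$ pairwise-crossing walls gives a $d$-cube in $C$. I would argue that such a family restricts, within a single cone $W_i$, to pairwise-crossing walls of the cone — but each $W_i$ has cyclic fundamental group, and with the wallspace structure chosen (single hyperplanes, or antipodal pairs crossing the generator cycle $\sigma$ of length $2n$), the walls of $W_i$ that pairwise cross are very restricted: in fact the carrier structure forces at most $\dim(X\vee B)=\max\{\dim X,1\}=\dim X$ of them to mutually cross inside a cone, essentially because crossing walls in $W_i$ correspond to a cube of $W_i\subset X\vee B$, and $W_i$ is a subcomplex of $X\vee B$ so contains no cube of dimension exceeding $\dim X$. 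Combining the contributions from the square part of $\widetilde X$ (which already carries at most $\dim X$ pairwise-crossing hyperplanes, since $\widetilde X$ has dimension $\dim X$) with the contribution of the coning operation (which can add dimension, but in a controlled way — the cones are glued along their hyperplane carriers), I would conclude $\dim C\le\max\{\dim X,2\}$, where the $2$ appears because even when $X$ is $1$-dimensional (so $\dim X=1$) the coning procedure of a $C'(\tfrac16)$ cubical presentation produces a dual complex which can be $2$-dimensional, recovering the classical small-cancellation case. Since $cd(G)\le\dim X$ can be arranged (indeed one may take $X$ so that $\dim X = \max\{cd(G),\dim X\}$, and by passing to an appropriate model even $\dim X=\max\{cd(G),1\}$ is not needed — it suffices that $cd(G)\le\dim X$), this yields $cd(\Gamma)\le\dim C\le\max\{\dim X,2\}$; choosing $X$ of minimal dimension gives $\max\{cd(G),2\}\ge cd(\Gamma)$.

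Alternatively, and perhaps more cleanly, I would avoid the dual-complex dimension count entirely and instead use the short exact sequence $1\to Ker(q)\to G\ast\pi_1 B\to\Gamma\to 1$ together with the fact, established above, that $Ker(q)$ is free, in the form of a Cohen--Lyndon-type resolution (Proposition~\ref{prop:cohenlyndon}): one builds a free resolution of $\integers$ over $\integers\Gamma$ from a free resolution over $\integers(G\ast\pi_1 B)$ by attaching one extra free module accounting for the relators $Ker(q)$, which raises the length by at most $1$; this gives $cd(\Gamma)\le cd(G\ast\pi_1 B)+1=\max\{cd(G),1\}+1=\max\{cd(G)+1,2\}$. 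This is off by one from what is claimed, so to get the sharp bound $\max\{cd(G),2\}$ one needs the extra input that the relator module, being carried by the \emph{free} group $Ker(q)$, contributes a projective (indeed free) module in a degree that can be absorbed: concretely, since each relator $z_i$ is not a proper power and the $Ker(q)$ is free of infinite rank, the relation module $H_1(Ker(q))$ is free as a $\integers\Gamma$-module, so one can splice to obtain a resolution of length $\max\{cd(G),2\}$ rather than $cd(G)+1$.

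The hard part will be making the dimension bound sharp — getting $\max\{cd(G),2\}$ rather than the naive $cd(G)+1$ from the exact sequence. The honest route is the geometric one: showing that the dual cube complex $C$ has dimension at most $\max\{\dim X,2\}$, which requires a careful analysis of which collections of walls of $(X\vee B)^*$ can pairwise cross, using that the cones $W_i$ have cyclic fundamental group and that their wallspace structure consists of single hyperplanes and antipodal pairs, so that crossings among walls coming from a single cone are governed by the (at most $\dim X$-dimensional) cube structure of $W_i\subset X\vee B$, while crossings among walls from the base $X$ are governed by $\dim\widetilde X=\dim X$; the interaction term — crossings between a cone-wall and a base-hyperplane — is where the classical $2$-dimensionality of small-cancellation presentations enters and caps the total at $\max\{\dim X,2\}$. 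I expect that carefully bounding this interaction, and arguing that no more than $\max\{\dim X,2\}$ walls can be pairwise crossing, is the main obstacle.
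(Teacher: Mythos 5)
Your first (geometric) route has a gap at both ends. First, the bound $\dim C\le\max\{\dim X,2\}$ on the dual cube complex is asserted but not proved, and it is not clear it holds: walls of $\widetilde{(X\vee B)^*}$ are equivalence classes of hyperplanes that can cross in the dual even when the underlying hyperplanes in a cone $W_\iota$ are disjoint, so the ``cube of $W_\iota$'' heuristic does not control the maximal families of pairwise-crossing walls without a real argument (this is precisely what the $B(6)$ machinery is careful about, and dimension bounds on the dual are not part of its standard conclusions). Second, even granting such a bound, it would be in terms of $\dim X$, not $cd(G)$; the cube complex $X$ is part of the hypotheses and one cannot in general arrange $\dim X=cd(G)$ (indeed $cd(G)\le\dim X$ is all that is guaranteed, and the inequality can be strict), so ``choosing $X$ of minimal dimension'' does not reduce $\max\{\dim X,2\}$ to $\max\{cd(G),2\}$.

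Your second (algebraic) route is closer to the paper, and you correctly identify the exact sequence $1\to\ker q\to G\ast\pi_1 B\to\Gamma\to1$ with $\ker q$ free and the resulting off-by-one bound $cd(\Gamma)\le cd(G)+1$ as the thing to sharpen. But the proposed fix — ``the relation module $H_1(\ker q)$ is free as a $\integers\Gamma$-module since each relator is not a proper power'' — is not justified and is not the right statement. What the paper actually uses is the Cohen–Lyndon property for the triple $(G\ast\pi_1 B,\{H_\iota\},\{\langle\langle c_\iota\rangle\rangle_{H_\iota}\})$ (available by Sun's theorem for malnormal quasiconvex $H_\iota$, after avoiding a finite bad set), which decomposes $\ker q$ as a free product $\ast_{\iota,t} N_\iota^t$ but does \emph{not} directly make the relation module free over $\integers\Gamma$. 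Instead one plugs this into Petrosyan–Sun's bound
\[
cd(G/\langle\langle N\rangle\rangle)\le\max\Bigl\{cd(G),\ \sup_\iota\{cd(H_\iota)+1\},\ \sup_\iota\{cd(H_\iota/\langle\langle N_\iota\rangle\rangle_{H_\iota})\}\Bigr\},
\]
and the crucial missing step in your sketch is controlling the third term: one must show each one-relator quotient $H_\iota/\langle\langle c_\iota\rangle\rangle_{H_\iota}$ has $cd\le2$. The paper does this by noting these quotients admit $C(6)$ graphical small cancellation presentations (restricting the cubical small cancellation data $W_\iota\subset C_\iota$ to spines), hence are aspherical and $2$-dimensional. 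Without identifying and bounding this third term, the sharpening from $cd(G)+1$ to $\max\{cd(G),2\}$ does not go through.
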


Before proving Proposition~\ref{prop:upperbound}, we state some auxiliary results.

\begin{definition}[The Cohen-Lyndon property]
Let $G$ be a group, $\{H_i\}_{i \in I}$ a family of subgroups and $N_i \lhd H_i$ for each $i \in I$. The triple $(G,\{H_i\},\{N_i\})$ has the \emph{Cohen-Lyndon property} if for each $i \in I$ there exists a left transversal $T_i$ of $H_i \langle \langle \cup_{i \in I} N_i \rangle \rangle$ in $G$ such that $\langle \langle \cup_{i \in I} N_i \rangle \rangle$ is the free product of the subgroups $N_i^t$ for $t \in T_i$, so $$\langle \langle \cup_{i \in I} N_i \rangle \rangle = \ast_{i \in I, t \in T_i}N_i^t.$$
\end{definition}

The Cohen-Lyndon property was first defined and studied in~\cite{CL63}, where it was proven to hold for triples $(F,C, \langle c \rangle)$ where $F$ is free, $C$ is a maximal cyclic subgroup of $F$, and $c \in C - \{1\}$. This was later generalised in~\cite{EH87} to the setting of free products of locally indicable groups. Most remarkably, it was recently proven in~\cite{Sun20} that triples $(G,\{H_i\},\{N_i\})$ have the Cohen-Lyndon property when the $H_i$ are ``hyperbolically embedded" subgroups of $G$ and the $N_i$ avoids a finite set of ``bad" elements depending only on the $H_i$. We will not define hyperbolically embedded subgroups here, and instead state only the particular case of the theorem that is required for our applications:

\begin{theorem}\cite{Sun20}
Let $G$ be hyperbolic, $\{H_i\}$ be malnormal and quasiconvex subgroups of $G$, and $N_i\lhd H_i$ for each $i$. Then there exists a finite set of elements $\{g_1, \ldots, g_n\} \in \cup_i H_i - \{1\}$ such that the triple $(G,\{H_i\},\{N_i\})$ has the Cohen-Lyndon property provided that $N_i \cap \{g_1, \ldots, g_n\}=\emptyset$ for all $i$.
\end{theorem}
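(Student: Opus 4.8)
The plan is to obtain this as the ``free product'' refinement of group-theoretic Dehn filling, by running the theory of hyperbolically embedded subgroups and rotating families of Dahmani--Guirardel--Osin (and Osin) and extracting from it the precise decomposition of the normal closure. The argument proceeds in four steps.

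\emph{Step 1.} We are in the hyperbolically embedded setting. Since $G$ is hyperbolic, each $H_i$ is quasiconvex, and $\{H_i\}$ is malnormal (hence almost malnormal), there is a finite subset $X\subseteq G$ with $\{H_i\}\hookrightarrow_h(G,X)$: the relative Cayley graph $\mathcal G=\Gamma\bigl(G,\,X\sqcup\bigsqcup_i H_i\bigr)$ is Gromov-hyperbolic and the relative metrics $\widehat d_i$ on the $H_i$ are locally finite. This is the standard characterization of hyperbolic embeddedness of a collection of quasiconvex subgroups of a hyperbolic group (Dahmani--Guirardel--Osin).

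\emph{Step 2.} The rotating family and the bad set. Let $v_i\in\mathcal G$ be the cone vertex over $H_i$; its $G$-stabilizer is $H_i$, and by malnormality distinct $G$-translates of the $v_i$ have trivial common stabilizer. Take the set of \emph{apices} to be $\mathcal C=\{g v_i:i\in I,\ g\in G\}$, and for $c=g v_i$ put $G_c=gN_ig^{-1}$; this is a $G$-equivariant rotating family. The Dehn-filling estimates of Osin and of Dahmani--Guirardel--Osin furnish, for each $i$, a finite set $\mathcal F_i\subseteq H_i\smallsetminus\{1\}$ (finitely many elements of bounded $\widehat d_i$-length) such that whenever $N_i\cap\mathcal F_i=\emptyset$ the family is \emph{very rotating}: every nontrivial element of $G_c$ moves every apex $c'\neq c$, and every point outside a fixed ball around $c$, by a definite large amount. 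Set $\{g_1,\dots,g_n\}=\bigcup_i\mathcal F_i\subseteq\bigl(\bigcup_i H_i\bigr)\smallsetminus\{1\}$, the finite set asserted by the theorem.

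\emph{Step 3.} The structure theorem and the transversal. The rotation subgroup $\langle G_c:c\in\mathcal C\rangle$ equals $K:=\nclose{\bigcup_{i}N_i}$, and the structure theorem for very rotating families gives a free product decomposition $K=\ast_{c\in\mathcal C_0}G_c$, where $\mathcal C_0\subseteq\mathcal C$ is any set of representatives of the $K$-orbits of apices. Since $G$ acts transitively on the type-$i$ apices with point-stabilizer $H_i$, these are identified with $G/H_i$; as $K\lhd G$, the $K$-orbits of type-$i$ apices are $K\backslash G/H_i\cong G/(H_iK)$, i.e.\ a left transversal $T_i$ of $H_iK=H_i\nclose{\bigcup_i N_i}$ in $G$, with representatives the apices $t v_i$ ($t\in T_i$). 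Since $G_{t v_i}=tN_it^{-1}=N_i^{\,t}$, we conclude
\[
K=\nclose{\textstyle\bigcup_{i\in I}N_i}=\ast_{\,i\in I,\ t\in T_i}\,N_i^{\,t},
\]
which is precisely the Cohen--Lyndon property for $(G,\{H_i\},\{N_i\})$.

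\emph{Main obstacle.} Steps 1--2 are bookkeeping with established machinery; the substance is the structure theorem of Step 3, the free-product description of the normal closure generated by a very rotating family. The route I would take is the ``windmill'' induction of Dahmani--Guirardel--Osin: realize $\mathcal G/K$ as an increasing union of hyperbolic spaces obtained by equivariantly coning off $K$-orbits of apices one generation at a time, verifying that hyperbolicity and the very-rotating separation persist at each stage, and then run a ping-pong / Bass--Serre argument on $\mathcal G$ --- using that the ``fins'' of $\mathcal G$ around the apices are permuted freely by the rotations --- to show that a reduced word $r_1\cdots r_m$ whose consecutive syllables are based at distinct apices labels a quasigeodesic of $\mathcal G$, hence is nontrivial. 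This is exactly the analysis performed in \cite{Sun20} on top of the rotating-families framework, and I would cite it for the detailed verification rather than reproduce it here.
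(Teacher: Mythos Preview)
The paper does not give its own proof of this theorem: it is quoted verbatim as a special case of the main result of \cite{Sun20} and used as a black box, with the remark that malnormal quasiconvex subgroups of a hyperbolic group are hyperbolically embedded. So there is no proof in the paper to compare your proposal against.

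That said, your sketch is an accurate outline of how \cite{Sun20} actually establishes the result, via the Dahmani--Guirardel--Osin very-rotating-families machinery: Step~1 (malnormal quasiconvex $\Rightarrow$ hyperbolically embedded) is the standard reduction, Step~2 is the passage from sufficiently deep Dehn filling to a very rotating family on the coned-off Cayley graph, and Step~3 is precisely the structure theorem that \cite{Sun20} proves, identifying the normal closure with a free product over a transversal of $H_iK$ in $G$. Your identification of the $K$-orbits of apices of type $i$ with $K\backslash G/H_i\cong G/(H_iK)$ and hence with a left transversal $T_i$ is the correct bookkeeping, and your acknowledgement that the windmill/ping-pong argument is the substantive point, to be cited from \cite{Sun20}, is appropriate. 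In short, your proposal is correct and coincides with the argument the paper is citing; there is simply nothing further in the paper itself.
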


To simplify notation, let $\{H_\ell\} \cup \{H'_{ij}\} \cup \{H''_{ij}\} := \mathbf{H}$ and write $H_{\iota} \in \mathbf{H}$. Let $S \subset \mathbf{H}$ be a finite set. It is clear from the constructions in Theorems~\ref{thm:main} and~\ref{thm:cyclic} (say, by applying Claim~\ref{clm:separatingcyclics} to $S$ before producing the cyclic subgroups in the proof of Theorem~\ref{thm:cyclic}) that the elements $\{r_\ell w_\ell\}^k_{\ell=1}, \{a_i x_ja_i^{-1} w'_{ij}\}^{s,m}_{i=1,j=1}, \{a_i^{-1} x_j a_i w''_{ij}\}^{s,m}_{i=1,j=1}$  can be chosen so that for each $c_{\iota} \in \{r_\ell w_\ell\}^k_{\ell=1} \cup \{a_i x_ja_i^{-1} w'_{ij}\}^{s,m}_{i=1,j=1} \cup \{a_i^{-1} x_j a_i w''_{ij}\}^{s,m}_{i=1,j=1}$, where $c_\iota \in H_\iota$, the intersection $\langle \langle c_{\iota} \rangle \rangle_{ H_\iota }\cap S$ is empty, and hence:

\begin{corollary}\label{prop:cohenlyndon}
$\Gamma$ can be chosen so that the triple $(G\ast \pi_1B, \{H_\iota\} , \{\langle \langle c_\iota \rangle \rangle_{H_\iota }\})$ has the Cohen-Lyndon property.
\end{corollary}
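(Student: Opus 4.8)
\medskip\noindent\textbf{Proof proposal.} The plan is to deduce the corollary directly from the theorem of Sun quoted above, applied to the hyperbolic group $G\ast\pi_1B$, to the collection $\mathbf H=\{H_\iota\}$ of subgroups carrying the relators (each $H_\iota$ here being of the form $H\ast\langle y_\iota\rangle$ for one of the free subgroups $H$ supplied by Corollary~\ref{cor:stabs}), and to the normal subgroups $N_\iota=\langle\langle c_\iota\rangle\rangle_{H_\iota}\lhd H_\iota$. Note that $G\ast\pi_1B$ is hyperbolic because $G$ is and $\pi_1B$ is free, that each $H_\iota$ is free (a free product of free groups), and hence that each $N_\iota$, being a subgroup of $H_\iota$, is free as well; this last fact is what will subsequently give freeness of $Ker(q)$ through the Cohen--Lyndon decomposition.

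First I would check that $\mathbf H$ is a malnormal collection of quasiconvex subgroups of $G\ast\pi_1B$. Malnormality of the $H$'s inside $G$ is exactly the content of Corollary~\ref{cor:stabs}; since each $H$ lies in the free factor $G$, a normal-form argument in the free product promotes this to malnormality of $\mathbf H$, using also that the cyclic subgroups $\langle y_\iota\rangle$ pair malnormally with one another and with the $H$'s (which one arranges, if necessary, through the choice of presentation of $Q$ and of the generating set of $G$). Quasiconvexity of each $H_\iota$ follows from quasiconvexity of $H$ in $G$, from quasiconvexity of $G$ in $G\ast\pi_1B$, and from a standard combination-theorem argument applied to the splitting $H_\iota=H\ast\langle y_\iota\rangle$. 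Sun's theorem then supplies a finite set $\{g_1,\dots,g_n\}\subset\bigcup_\iota H_\iota\setminus\{1\}$, depending only on $G\ast\pi_1B$ and $\mathbf H$, such that $(G\ast\pi_1B,\mathbf H,\{N_\iota\})$ has the Cohen--Lyndon property provided $N_\iota\cap\{g_1,\dots,g_n\}=\emptyset$ for every $\iota$.

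The crux is that this finite set is pinned down as soon as $\mathbf H$ is fixed, whereas the relators $c_\iota$ remain at our disposal: Theorem~\ref{thm:cyclic} produces a whole family of admissible groups $\Gamma$, parametrised by the small-cancellation constant $\alpha$ and by the auxiliary paths $\sigma_\iota'$. I would exploit this by enlarging $\alpha$. Indeed, the $C'(1/\alpha)$ condition together with the cubical version of Greendlinger's lemma~\cite{WiseIsraelHierarchy} forces every non-trivial element of $\langle\langle\{z_\iota\}\rangle\rangle_{G\ast\pi_1B}=Ker(q)$ --- and hence of the subgroup $N_\iota=\langle\langle c_\iota\rangle\rangle_{H_\iota}$ it contains --- to be represented by a path of length at least a definite fraction of $\min_\iota\|W_\iota\|$, which is $\gtrsim K\alpha^2$; taking $\alpha$ larger than a threshold depending only on $\max_j|g_j|$ and on $K$ (neither of which depends on $\alpha$) then forces $g_j\notin N_\iota$ for all $\iota$ and $j$, purely for length reasons. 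Alternatively, as the author indicates, one applies Claim~\ref{clm:separatingcyclics} inside the free group $H_\iota$ to the finite set $\{g_j:g_j\in H_\iota\}$ before choosing $\sigma_\iota'$ in the proof of Theorem~\ref{thm:cyclic}, and takes $\sigma_\iota'$ inside the resulting subgroup, which forms a malnormal collection with each $\langle g_j\rangle$; this leaves every hypothesis of Theorem~\ref{thm:cyclic} intact, since those only required $\sigma_\iota'$ to be a long non-power word without long periodic subwords and without backtracks.

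I expect the main obstacle to be exactly the verification that it is the \emph{normal closure} $N_\iota$, and not merely the cyclic subgroup $\langle c_\iota\rangle$, that misses the finite bad set: the Greendlinger/length argument dispatches this cleanly, precisely because the bad set is finite and fixed before $\alpha$ is chosen, whereas the route through Claim~\ref{clm:separatingcyclics} needs the additional observation that in this situation the malnormality it produces also constrains normal closures. Once the hypothesis of Sun's theorem is met, the Cohen--Lyndon property for $(G\ast\pi_1B,\{H_\iota\},\{N_\iota\})$ --- which is the assertion of the corollary --- is immediate.
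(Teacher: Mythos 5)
Your proposal follows the same skeleton as the paper's: invoke Sun's theorem for the hyperbolic group $G\ast\pi_1B$, the quasiconvex malnormal collection $\mathbf H=\{H_\iota\}$, and the normal subgroups $N_\iota=\langle\langle c_\iota\rangle\rangle_{H_\iota}$, and then arrange the choice of the $c_\iota$ in Theorem~\ref{thm:cyclic} so that each $N_\iota$ misses the finite bad set. Where you genuinely improve on the paper is in the mechanism for dodging the bad set. The paper points to Claim~\ref{clm:separatingcyclics}, which produces a subgroup $J<H_\iota$ pairing malnormally with each $\langle g_j\rangle$; but, as you yourself flag, this on its own controls only the conjugates of $J$, not the full normal closure $\langle\langle c_\iota\rangle\rangle_{H_\iota}$, which is generated by all $H_\iota$-conjugates of $c_\iota$ and could a priori contain some $g_j$ without any single conjugate of $J$ doing so. Your Greendlinger/length route is the cleaner fix: the bad set is determined as soon as $\mathbf H$ is fixed, $\mathbf H$ does not depend on $\alpha$, and the cubical Greendlinger lemma forces every nontrivial element of $\operatorname{Ker}(q)\supseteq N_\iota$ to have length comparable to $\min_\iota\|W_\iota\|\gtrsim K\alpha^2$; taking $\alpha$ large relative to $\max_j|g_j|$ therefore excludes the $g_j$ from every $N_\iota$ at once. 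This is the right argument and I would lead with it rather than the Claim.

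One caveat applies to both your write-up and the paper's: the hypotheses of Sun's theorem — malnormality and quasiconvexity of the collection $\{H_\iota\}=\{H_i\ast\langle y_\iota\rangle\}$ inside $G\ast\pi_1B$ — are not really established. Corollary~\ref{cor:stabs} gives malnormality of the $H_i$ inside $G$, not of the enlarged groups $H_i\ast\langle y_\iota\rangle$ inside the free product, and your suggestion that malnormal pairing of the $\langle y_\iota\rangle$ with one another and with the $H_i$ "can be arranged through the choice of presentation of $Q$" is in tension with the fact that the $y_\ell=r_\ell$ are the relators of $Q$, hence not freely choosable, and the $y'_{ij}=a_ix_ja_i^{-1}$ are $\pi_1B$-conjugates of fixed generators of $G$, so distinct $\langle y'_{ij}\rangle$ with the same $a_i$ may well intersect nontrivially up to conjugacy. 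Some additional argument — for example choosing the $H_i$ after fixing the $y_\iota$ so that the enlarged groups form a malnormal family, or replacing the $H_\iota$ in the Cohen–Lyndon triple by $C_{G\ast\pi_1 B}(\langle c_\iota\rangle)$ along the lines of Lemma~\ref{lem:comms} — is needed to close this, and neither you nor the paper supplies it.
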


The following is proven in~\cite{PetSun21}:

\begin{proposition}\label{prop:boundscd}  If  $(G,\{H_i\},\{N_i\})$ has the Cohen-Lyndon property, then $$cd(G/\langle \langle \cup_{i \in I} N_i \rangle \rangle) \leq \max\{cd(G), \sup\{cd(H_i)+1\}, \sup \{cd(H_i/ N_i \}\}.$$
\end{proposition}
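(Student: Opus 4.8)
The statement to prove is Proposition~\ref{prop:upperbound}: $\Gamma$ can be chosen so that $\max\{cd(G),2\}\geq cd(\Gamma)$.

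\medskip

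The plan is to feed the Cohen--Lyndon property established in Corollary~\ref{prop:cohenlyndon} into the cohomological dimension inequality from~\cite{PetSun21} stated just above, applied to the triple $(G\ast\pi_1 B, \{H_\iota\}, \langle\langle c_\iota\rangle\rangle_{H_\iota})$. By construction $\Gamma = (G\ast\pi_1 B)\big/\langle\langle\{c_\iota\}\rangle\rangle$, so that inequality reads
\[
cd(\Gamma)\leq \max\bigl\{cd(G\ast\pi_1 B),\ \sup_\iota\{cd(H_\iota)+1\},\ \sup_\iota\{cd(H_\iota/\langle\langle c_\iota\rangle\rangle_{H_\iota})\}\bigr\}.
\]
I will bound each of the three terms. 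First, $cd(G\ast\pi_1 B)=\max\{cd(G),cd(\pi_1 B)\}=\max\{cd(G),1\}=cd(G)$ by the third part of the Proposition on properties of $cd$ (since $\pi_1 B$ is free of rank $s\geq 1$, assuming $s\geq 1$; if $s=0$ then $Q$ is trivial and the statement is immediate, or one simply notes $cd(\pi_1 B)\leq 1$). Second, each $H_\iota$ is a free group (the collection $\mathbf{H}$ from Corollary~\ref{cor:stabs} consists of free non-abelian quasiconvex subgroups), so $cd(H_\iota)=1$ and hence $cd(H_\iota)+1=2$. Third, each quotient $H_\iota/\langle\langle c_\iota\rangle\rangle_{H_\iota}$ is a one-relator group: $H_\iota$ is free and $c_\iota$ is a single element of $H_\iota$, and by construction (Theorem~\ref{thm:cyclic}, which guarantees $c_\iota$ is not a proper power, together with the Freiheitssatz/Lyndon theory of one-relator groups, or directly via the Cohen--Lyndon theorem~\cite{CL63} applied to $(H_\iota, \langle c_\iota^{\mathrm{root}}\rangle, \langle c_\iota\rangle)$ which shows $\langle\langle c_\iota\rangle\rangle_{H_\iota}$ is free) the presentation complex of $H_\iota/\langle\langle c_\iota\rangle\rangle_{H_\iota}$ is aspherical, so $cd(H_\iota/\langle\langle c_\iota\rangle\rangle_{H_\iota})\leq 2$. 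Combining, $cd(\Gamma)\leq\max\{cd(G),2,2\}=\max\{cd(G),2\}$, which is exactly the claim.

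\medskip

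I expect the main subtlety to be the third term, namely verifying that $cd(H_\iota/\langle\langle c_\iota\rangle\rangle_{H_\iota})\leq 2$. This rests on the classical fact that a one-relator group $\langle S\mid w\rangle$ with $w$ not a proper power has an aspherical presentation complex (Lyndon), giving cohomological dimension at most $2$; here $H_\iota$ is free on a finite basis, and $c_\iota$ is one word in that basis which, by the choices made in Theorem~\ref{thm:cyclic} (where the generators $z_i=z_i'y_i$ are explicitly arranged not to be proper powers, and this is inherited by $c_\iota$ since $w_\iota\in\pi_1 X$ is chosen with that property), is not a proper power in $H_\iota$. I would cite Lyndon's theorem (or~\cite{BrownBook82and94}, VIII.10, which treats one-relator groups) for this. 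Alternatively, and perhaps more cleanly within the paper's framework, one can observe that since $\langle\langle c_\iota\rangle\rangle_{H_\iota}$ is a free product of conjugates of $\langle c_\iota\rangle$ (Cohen--Lyndon property for the free group $H_\iota$), it is free, so the short exact sequence $1\to\langle\langle c_\iota\rangle\rangle_{H_\iota}\to H_\iota\to H_\iota/\langle\langle c_\iota\rangle\rangle_{H_\iota}\to 1$ together with the subadditivity of $cd$ in the second part of the $cd$-Proposition does not directly bound the quotient, but one can instead use that $H_\iota/\langle\langle c_\iota\rangle\rangle_{H_\iota}$ acts freely on the Cayley complex of its one-relator presentation, which is contractible when $c_\iota$ is not a proper power; I will present whichever of these is cleanest. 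The remaining two terms are routine given the properties recorded earlier: $G\ast\pi_1 B$ having $cd$ equal to $cd(G)$, and the freeness of the $H_\iota$, are both immediate from Corollary~\ref{cor:stabs} and the $cd$-Proposition.

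\medskip

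To assemble the proof cleanly: I first invoke Corollary~\ref{prop:cohenlyndon} to get the Cohen--Lyndon property for $\Gamma$ (with the $c_\iota$ chosen appropriately), then apply the~\cite{PetSun21} inequality, then substitute the three bounds computed above and conclude. One small bookkeeping point to handle is the degenerate case where $Q$ is trivial or $s=0$, in which case $\pi_1 B$ is trivial and $\Gamma$ is simply a quotient of $G$ by one-relator-type relations over hyperplane-normalizing free subgroups; the same argument applies verbatim with $cd(G\ast\pi_1 B)=cd(G)$, so no separate treatment is really needed, but I would remark on it for safety.
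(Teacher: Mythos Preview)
Note first that the displayed statement is the Petrosyan--Sun inequality, which the paper does not prove but cites from~\cite{PetSun21}; your write-up is instead a proof of Proposition~\ref{prop:upperbound}, and that is what I compare below.

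Your argument is correct and follows the paper's scheme exactly through the first two terms: invoke Corollary~\ref{prop:cohenlyndon}, apply the cited~\cite{PetSun21} inequality, and use $cd(G\ast\pi_1 B)=cd(G)$ together with $cd(H_\iota)=1$ since each $H_\iota$ is free.

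The only genuine difference is in how you bound the third term $cd\bigl(H_\iota/\langle\langle c_\iota\rangle\rangle_{H_\iota}\bigr)\leq 2$. You appeal to Lyndon's asphericity theorem for one-relator groups, using that each $H_\iota$ is free and each $c_\iota$ is a single word which the construction in Theorem~\ref{thm:cyclic} arranges not to be a proper power. The paper instead stays within its small-cancellation framework: it replaces the cubical cone $C_\iota$ by a homotopy-equivalent graph $\bar C_\iota\subset C_\iota^{(1)}$ and the relator $W_\iota$ by a cycle $\bar W_\iota\subset\bar C_\iota$, observes that pieces can only shrink under this replacement so that the graphical presentation $\langle\bar C_\iota\mid\bar W_\iota\rangle$ inherits $C(6)$ from the ambient $C'(\tfrac{1}{16})$ condition, and then cites the asphericity of graphical $C(6)$ presentations (Gruber). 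Your route is shorter and more classical, and avoids setting up the graph replacement; the paper's route avoids importing one-relator theory and keeps the argument inside the graphical/cubical small-cancellation language already developed, at the cost of the extra bookkeeping needed to transfer the piece bounds to~$\bar W_\iota$.
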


\begin{definition}A \emph{graphical} small cancellation presentation is a 1-dimensional cubical small cancellation presentation. Namely, a cubical presentation $X^*=\langle X | \{Y_i\}\rangle$ where $X$ is a graph and $Y_i \rightarrow X$ are graph immersions. 
\end{definition}

In the particular setting of graphical presentations, it is well-known that the coned-off space $X^*$ is aspherical. Concretely, the following Theorem holds. A proof is given in~\cite{Gruber15}, though we caution the reader that the language utilised there differs from that on this note.

\begin{theorem}
Let $X^*=\langle X | \{Y_i\}\rangle$ be a $C(6)$-graphical small cancellation presentation. Then $X^*$ is aspherical.
\end{theorem}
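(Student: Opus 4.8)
The plan is to prove that $X^*$ is aspherical by establishing $\pi_2(X^*)=0$, and to obtain this by ruling out nontrivial spherical diagrams over $X^*$ through a combinatorial Euler-characteristic (Gauss--Bonnet) argument, mimicking the classical small cancellation proof. First I would note that $X^*$ is a $2$-complex: since $X$ is a graph and each $Y_i$ is a graph, each cone $\mathrm{Cone}(Y_i)=(Y_i\times[0,1])/(Y_i\times\{0\})$ is $2$-dimensional, and $X^*$ is built from $X$ by attaching these cones along the $\varphi_i$. For a connected $2$-complex, asphericity is equivalent to $\pi_2(X^*)=0$: in the universal cover $\widetilde{X^*}$, which is a simply connected $2$-complex, Hurewicz gives $\pi_2(X^*)\cong\pi_2(\widetilde{X^*})\cong H_2(\widetilde{X^*})$, and since a $2$-complex has vanishing homology in degrees $\ge 3$, the vanishing of $\pi_2$ makes $\widetilde{X^*}$ acyclic and simply connected, hence contractible by Hurewicz and Whitehead. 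So it suffices to prove $\pi_2(X^*)=0$.

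I would then suppose, for contradiction, that $\pi_2(X^*)\neq 0$. Using the disc-diagram machinery for cubical presentations (see \cite{WiseIsraelHierarchy}, and in the graphical setting \cite{Gruber15}), a nontrivial $\pi_2$-class is carried by a spherical diagram $D\to X^*$, and a representative of minimal complexity (area, then number of cells) is \emph{reduced}: it has no cancellable pair of cone-cells, no spurs, and the boundary path of each of its cone-cells is essential in $X^*$. Every $2$-cell of $D$ maps to a cone-cell of $X^*$ (the only $2$-cells of $X^*$), and there is at least one since $S^2$ is $2$-dimensional. Because $S^2$ has no boundary, the boundary path of each cone-cell of $D$ is a closed essential path in $X^*$ decomposed as a concatenation of pieces --- its maximal overlaps with the neighbouring cone-cells --- so the $C(6)$ hypothesis forces it to be a concatenation of at least $6$ pieces.

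Next I would run the Euler count. Regard $D$ as a polygonal decomposition of $S^2$ whose faces are the cone-cells, whose edges are the maximal arcs shared between two cone-cells (i.e.\ the pieces), and whose vertices are the branch points. Reducedness forbids vertices of valence $\le 2$ --- valence $1$ is a spur, and a valence-$2$ vertex either disappears after merging its two incident pieces into a single edge or else exhibits a cone-cell whose boundary is a concatenation of two pieces, violating $C(6)$ --- so every vertex has valence $\ge 3$, and by the previous paragraph every face is a polygon with at least $6$ sides. Double counting incidences, $2E=\sum_{v}(\text{valence of }v)\ge 3V$ and $2E=\sum_{f}(\text{number of sides of }f)\ge 6F$, so $V-E+F\le\tfrac{2}{3}E-E+\tfrac{1}{3}E=0$, contradicting $\chi(S^2)=2$. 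Hence no reduced spherical diagram exists, $\pi_2(X^*)=0$, and $X^*$ is aspherical.

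The step I expect to be the main obstacle is the reduction: making precise the notion of a reduced spherical diagram over the coned-off space $X^*$ and checking that a nontrivial class in $\pi_2(X^*)$ yields one --- in particular, that a minimal-complexity diagram truly has no cancellable cone-cell pair and only essential cone-cell boundaries, and that one need not fear a sphere supported inside (a lift of) a single cone, which cannot occur since each $\mathrm{Cone}(Y_i)$ is contractible. This is exactly the place where the disc-diagram theory of cubical and graphical small cancellation does the work; once it is in place, $C(6)$ is used only through the ``at least $6$ sides'' estimate, and the rest is the routine Euler-characteristic computation above.
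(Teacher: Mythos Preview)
The paper does not give its own proof of this statement; it merely records that a proof appears in \cite{Gruber15} (with a caution about differing terminology). Your proposed argument---reduce to a minimal spherical diagram over the $2$-complex $X^*$, observe that in a reduced diagram every cone-cell boundary is an essential closed path expressed as a concatenation of at least six pieces, and derive a contradiction from the Euler-characteristic inequality $V-E+F\le 0$ on $S^2$---is exactly the classical strategy, and is in essence the argument carried out in the cited reference for the graphical setting. The point you single out as the main obstacle, namely setting up the right notion of reduced spherical diagram and showing a nontrivial $\pi_2$-class produces one, is indeed where the technical work in \cite{Gruber15} lies; once that is in place your Gauss--Bonnet count goes through.
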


\begin{remark}
As in the statement of Corollary~\ref{prop:cohenlyndon}, we simplify the notation so that $\{H_\ell\} \cup \{H'_{ij}\} \cup \{H''_{ij}\} := \mathbf{H}$, $c_\iota \in H_\iota$ equals the corresponding $r_\ell w_\ell, a_i x_ja_i^{-1} w'_{ij}$, or $a_i^{-1} x_j a_i w''_{ij}$, and $C_\iota \rightarrow X \vee B$, $W_\iota \rightarrow X \vee B$ are the corresponding local isometries defined in the proof of Theorem~\ref{thm:cyclic} having $\pi_1(C_\iota)=H_\iota, \pi_1(W_\iota)=\langle c_\iota \rangle$ for each $\iota$.
\end{remark}

\begin{proof} [Proof of Proposition~\ref{prop:upperbound}]

By the Cohen-Lyndon property for $(G\ast \pi_1B, \{H_\iota\} , \{\langle \langle c_\iota \rangle \rangle_{H_\iota }\})$, then $cd(G\ast \pi_1B/\langle \langle \cup_\iota c_\iota \rangle \rangle) \leq \max \{cd(G)\ast \pi_1B, \sup \{cd(H_\iota)+1\}, \sup \{cd(H_\iota/ \langle \langle c_\iota \rangle \rangle_{ H_\iota }\})\}$, and since each $ H_\iota$ is free, then $cd(H_\iota)=1$ for all $H_\iota \in \mathbf{H}$. We claim that each of the quotients $H_\iota /  \langle \langle c_\iota \rangle \rangle_{H_\iota}$ has a $C(6)$-graphical small cancellation presentation, and so $cd( H_\iota / \langle \langle c_\iota \rangle \rangle_{H_\iota })\leq 2$ for all $H_\iota \in \mathbf{H}$.

To see this, consider the cubical presentation $\langle X\vee B | \{W_\iota \rightarrow X\vee B\}_\iota \rangle$ constructed in the proof of Theorem~\ref{thm:main}. As explained in the proof of Theorem~\ref{thm:cyclic}, each $H_\iota$ is carried by a local isometry $C_\iota \rightarrow X \vee B$. Since each $C_\iota$ is itself a non-positively curved cube complex and $\pi_1 C_\iota \cong H_\iota$ is free, then each $C_\iota$ is homotopy equivalent to a graph $\bar{C}_\iota$ in $C^{(1)}_\iota$. Similarly, each $W_\iota$ is homotopy equivalent to a cycle $\bar{W}_\iota$ in $W_\iota$ and we can further assume that each $\bar{W}_\iota $ lies in $\bar{C}_\iota$.
The intersections between pieces of each $\bar{W}_\iota$ are contained in the corresponding intersections between pieces of $W_\iota$, and the length of each $\bar{W}_\iota$ is bounded below by the diameter of the corresponding $W_\iota$, so each $\bar{W}_\iota$ has at least as many pieces as the corresponding $W_\iota$. Since the $W_\iota$ are chosen to satisfy at least $C'(\frac{1}{16})$, and in particular $C'(\frac{1}{16}) \Rightarrow C(6)$, then each $\langle \bar{C}_\iota | \bar{W}_\iota \rightarrow \bar{C}_\iota \rangle$ satisfies the $C(6)$ condition, and the proof is complete.
\end{proof}

This finishes the proof of Theorem~\ref{thm:main}.

\bibliographystyle{alpha}

\bibliography{bib9.bib}
\end{document}